\newtheorem{theorem}{Theorem}[section]
\newtheorem{theo}[theorem]{Theorem}
\newtheorem{remark}[theorem]{Remark}
\newenvironment{rem}{\begin{remark}\rm}{\end{remark}}
\newtheorem{defin}[theorem]{Definition}
\newenvironment{definition}{\begin{defin}\rm}{\end{defin}}
\newtheorem{question}[theorem]{Question}
\newtheorem{prop}[theorem]{Proposition}
\newtheorem{cor}[theorem]{Corollary}
\newtheorem{lemma}[theorem]{Lemma}
\newtheorem{example}[theorem]{Example}
\newcommand{\sR}{{\mathcal R}}
\newcommand{\PP}{\ensuremath{\mathbb{P}}}
\newcommand{\ZZ}{\ensuremath{\mathbb{Z}}}
\newcommand{\QQ}{\ensuremath{\mathbb{Q}}}
\newcommand{\NN}{\ensuremath{\mathbb{N}}}
\newcommand{\hol}{\ensuremath{\mathcal{O}}}
\newcommand\e{\epsilon}
\DeclareMathOperator{\Pic}{Pic}
\newcommand{\ra}{\ensuremath{\rightarrow}}
\def\eea{\end{eqnarray*}}
\def\bea{\begin{eqnarray*}}
\newcommand\dual{\mathrel{\raise3pt\hbox{$\underline{\mathrm{\thinspace d
\thinspace}}$}}}
\newcommand\qe{\ifhmode\unskip\nobreak\fi\quad $\Box$}       
\def\BOX{\hfill\lower.5\baselineskip\hbox{$\Box$}}
\newcommand{\rk}{{\rm rk}}
\title [The $P_{12}$-theorem in char = $ p > 0$] {Enriques' classification in characteristic $ p >0$ : the $P_{12}$-Theorem }
\author{Fabrizio Catanese - Binru Li}
\address {Lehrstuhl Mathematik VIII\\
Mathematisches Institut der Universit\"at Bayreuth\\
NW II,  Universit\"atsstr. 30\\
95447 Bayreuth}
\email{Fabrizio.Catanese@uni-bayreuth.de, Binru.Li@uni-bayreuth.de}
\thanks{AMS Classification:  14J10, 14J27.\\ 
Key words: Castelnuovo-Enriques classification of surfaces, plurigenera, $P_{12}$, \\
properly elliptic surfaces, quasi-elliptic surfaces,
Kodaira dimension.\\
The present work took place in the framework  of the 
 ERC Advanced grant n. 340258, `TADMICAMT' }
\date{\today}
\begin{document}

\maketitle

{\em  Dedicated to David Mumford on the occasion of his 80th birthday}

\begin{abstract}
The main goal of  this paper is to show that Castelnuovo- Enriques'   $P_{12}$-theorem (a precise version of the rough classification of
	algebraic surfaces) also holds
	 for algebraic surfaces $S$ defined over an algebraically closed field $k$ of  positive characteristic ($char(k) = p > 0$).
	 
	The result relies on a main theorem  describing   the growth of the plurigenera for properly-elliptic 
	or properly quasi-elliptic surfaces (surfaces with Kodaira dimension equal to 1). We also discuss the limit cases, i.e. the families of   surfaces which 
	show that  the results 
	of the main theorem are sharp. 	
\end{abstract}

\tableofcontents

\section*{Introduction}

The main technical result of the present article,  expressed in modern language,  is the following one:

{\bf Main Theorem.}
{\em Let $S$ be a projective surface of Kodaira dimension 1 defined over an algebraically closed field $k$,
and let $K_S$ be a canonical divisor on $S$, so that  $\Omega^2_S \cong \hol_S (K_S)$.

Then the growth of the {\bf plurigenera}
$ P_n(S) = dim H^0 (\hol_S ( n K_S)) = dim H^0 ( ( \Omega^2_S )^{\otimes n})$ satisfies:

\begin{enumerate}
\item
$P_{12}(S)\geq 2$
\item
there exists $n \leq 4$ such that $P_n (S) \geq 1 $
\item
there exists $n \leq 8$ such that $P_n (S) \geq 2$
\item
$\forall n \geq 14$  $P_n (S) \geq 2$.

\end{enumerate}}

While (2)-(3) of  the result are new also in the classical case where $k$ is a field of characteristic zero, (1) is due to Enriques 
\cite{Enr14} in characteristic 0 and  (4) was shown by Katsura and Ueno \cite{KU85} for elliptic surfaces in
all characteristics (but we reprove their result here as part of the above more general statement). Needless to say, we use in the proof of our theorem   many  results,
lemmas and propositions previously established by many authors,
especially Bombieri and Mumford,
Raynaud, and Katsura-Ueno (\cite{mum1}, \cite{BM77},\cite{bm3},  \cite{Ray76}, \cite{KU85}). 

  Most important is statement (1), which  allows to extend to positive characteristic
the main classification  theorem  of Castelnuovo and Enriques.  In modern language (see the next section for more details, and a more precise 
and informative statement)   the classification theorem  says:

{\bf $P_{12}$-Theorem.}
{\em Let $S$ be a projective surface  defined over an algebraically closed field $k$.

Then for the Kodaira dimension $ Kod(S)$ we have:
\begin{itemize}
\item
(I) $ Kod (S) = - \infty \iff P_{12}(S)= 0$
\item
(II) $ Kod (S) =  0  \iff P_{12}(S)= 1$
\item
(III) $ Kod (S) =  1  \iff P_{12}(S)  \geq 2$ and, for  $S$  minimal, $K^2_S = 0$
\item
(IV) $ Kod (S) =  2  \iff P_{12}(S)  \geq 2$ and, for    $S$  minimal, $K^2_S >0$.

\end{itemize}}

It should be observed that the estimates for the growth of the plurigenera are much weaker
if one considers properly elliptic non algebraic surfaces, see \cite{iitaka} who proved the analogue of (4) of the main theorem for non algebraic surfaces. Iitaka showed  that, for  $n \geq 86$,
$H^0 (\hol_S ( n K_S))$ yields the canonical elliptic fibration. One of the reasons why the estimate is much weaker
depends on the failure of the Poincar\'e reducibility theorem, implying in the algebraic case that a certain monodromy group $G$
is Abelian. Hence, for instance, if $G$ is Abelian, it cannot be a Hurwitz group, i.e. $G$ cannot  have generators $a,b,c$ of respective orders $(2,3,7)$  satistying $abc=1$. 

Indeed (we omit here the simple proof) the analogue of statement (1) for non algebraic surfaces is that $P_{42} \geq 2$.

 Concerning higher dimensional algebraic varieties, a natural question emerges:

\begin{question}
Given a projective manifold of $X$ dimension $N$, is there a  sharp number $d = d(N)$ such that 
\begin{enumerate}
\item
$ Kod (X) = - \infty \iff P_{d}(X)= 0$
\item
$ Kod (X) =  0  \iff P_{d}(X)= 1$
\item
$ Kod (X)  \geq 1   \iff P_{d}(X) \geq 2$ ?
\end{enumerate}

\end{question}

Progress on a related  question, about effectivity of the Iitaka fibration, was obtained, among others,   by  Fujino and Mori \cite{fujino-mori} and Birkar and Zhang \cite{birkar-zhang}.

\section{The classification theorem of Castelnuovo and Enriques}
Let $S$ be a nonsingular projective  surface defined over an algebraically closed field $k$, and let $K_S$ be a canonical divisor on $S$,
so that $\Omega^2_S \cong \hol_S (K_S)$. We assume that $S$ is minimal: this means that  there does not exist an irreducible curve $C$ of the fist kind,
i.e., a curve $C$ with $C^2 = K_S \cdot C= -1$. Let us recall the definition of the basic numerical invariants associated to $S$, which
allow its birational classification.

For each  integer $m \in \NN$, we denote as usual, following Castelnuovo and Enriques, by  $$P_m (S):=h^0(S,mK_S),$$ the {\bf $m$-th plurigenus} of $S$. 

In particular, the {\bf geometric genus is}  $p_g (S) : = P_1 (S)$, while the {\bf arithmetic irregularity} is defined as $ h (S)  : = h^1 (\hol_S)$,
and the  {\bf arithmetic genus is} defined as $$p_a (S) : = p_g (S)  - h(S)  = \chi (\hol_S) - 1.$$
To finish our comparison of classical and modern notation, recall that the {\bf geometric irregularity} is defined as $ q (S)  : = \frac{1}{2}b_1(S)$,
where $b_1(S) $ is the first l-adic Betti number of $S$,  $b_1(S) : = dim_{\QQ_l} H^1_{e t} (S, \QQ_l).$

$q(S) $ is equal to the dimension of the Picard scheme $\Pic^0(S)$, and also of the dual scheme $\Pic^0(\Pic^0(S))$, 
and of the Albanese variety  $Alb(S): = \Pic^0(\Pic^0(S)_{red})$.

The above numbers are all equal in characteristic zero: $q(S) = h(S) = h^0(\Omega^1_S)$, but not in characteristic $p >0$,
where one just has some inequalities.

Since $H^1 (\hol_S)$ is the Zariski tangent space to  the Picard scheme at the origin (\cite{orangebook}),
one has the inequalities (cf. \cite{BM77})
$$  h (S) \geq  q(S),  \  2 p_g (S) \geq \Delta : = 2 ( h(S) - q(S)) =  2  h(S) - b_1(S) \geq 0.$$

The inequality $ h^0(\Omega^1_S) \geq q$ was shown by Igusa \cite{Igu1},  and there are examples 
where equality does not hold, cf.
\cite{Igu2}, \cite{mumpath} \footnote{The space of regular one forms on the Albanese variety $A$
pulls back injectively to a subspace $V$ of the space $H^0 (\Omega^1_S)$,
contained in the space of $d$-closed forms; it is an open question how to characterize $V$, for instance Illusie suggested 
$V$ could be  the intersection of the  kernels of $ d \circ C^{m}$,
where $C$ is the Cartier operator, and $m$ is any positive integer: see \cite{seshadri}, \cite{DI}, \cite{Ill}}.

Moreover, the {\bf  linear genus } $ p^{(1)} (S): = K_S^2  + 1$ is the arithmetic genus of any canonical divisor on the minimal surface. It is a birational invariant for
every non ruled algebraic  surface.

The classification of smooth projective curves $C$  is given in terms of the genus $g(C) : = h^0(\hol_C(K_C))$,  
\begin{itemize}
	\item[(I)] $g(C) = 0  \iff C \cong \PP^1$.
	\item[(II)] $g(C) = 1  \iff   \hol_C \cong \hol_C (K_C)   \iff  C$ \ {\it  is an elliptic curve (it is  isomorphic to a plane cubic curve)}.
	\item[(III)] $g(C) \geq 2   \iff  C$ \ {\it   is of general type, i.e., $H^0(C, \hol_C (m K_C))$ yields an embedding of $C$
	for all $m \geq 3$.}
	\end{itemize}

Enriques and Castelnuovo (\cite{Enr14}  and \cite{CE}) were able to give the surface classification essentially  in terms of $P_{12}(S)$,
as follows:
\begin{theo} {\bf ( $P_{12}$-theorem of Castelnuovo-Enriques)}

Let $S$ be a projective smooth  surface defined over an algebraically closed field $k$ of characteristic zero,
and let  $ p^{(1)} (S): = K_S^2  + 1$ be the linear genus of a minimal model in the birational equivalence class of $S$. Then 
\begin{itemize}
	\item[(I)] $ P_{12}(S)=0 \iff $ {\it $S$ is ruled} $\iff $ {\it $S$ is birational to a product $ C \times \PP^1 $,  $g(C) = q(S) = h(S)$}.
	\item[(II)] $P_{12}=1  \iff  \hol_S \cong \hol_S ( 12 K_S)   $.
	\item[(III)] $ P_{12}\geq 2$ and $p^{(1)} (S)=1  \iff ${\it $S$ is properly elliptic, i.e.  $H^0(S, \hol_S ( 12 K_S))$  yields a fibration over a curve with
	general  fibres elliptic curves.}
	\item[(IV)] $P_{12}\geq 2$ and $p^{(1)} (S) > 1 \iff $ {\it $S$ is of general type, i.e. $H^0(S, \hol_S ( m K_S))$ yields a birational  embedding of $S$
	for $m $ large ($m \geq 5$ indeed suffices, as conjectured by Enriques in \cite{EnrSup} and proven by Bombieri \cite{Bom73})}.
\end{itemize}

Moreover, if $S$ is minimal, then in modern terminology:

\begin{itemize}
\item
{\bf Case (I):} $S \cong \PP^2$ or $S$ is a $\PP^1$-bundle over a curve $C$,
\item
{\bf Case (II), $p_g(S) = 1 , q(S) = 2 \iff   \hol_S \cong \hol_S (  K_S),  q(S) = 2 \iff $} $S$ is an Abelian surface .
\item
{\bf Case (II), $p_g(S) = 1 , q(S) = 0  \iff   \hol_S \cong \hol_S (  K_S),  q(S) = 0 \iff$} $S$ is a  K3 surface.
\item
{\bf Case (II), $p_g(S) = 0 , q(S) = 0  \iff   \hol_S \ncong \hol_S (   K_S), \  \hol_S \cong \hol_S (  2 K_S),  q(S) = 0 \iff$} $S$ is an Enriques  surface.
\item
{\bf Case (II), $ q(S) = 1 ( \Rightarrow p_g(S) = 0 ) \iff  \hol_S \ncong \hol_S (   K_S), \  \hol_S \cong \hol_S (  m K_S), \ m \in \{2,3,4,6\},  q(S) = 1 \iff $} $S$ is a hyperelliptic surface.
\item
{\bf Case (III), $ p_a(S) = - 1 \iff  S \cong C \times E,  \ g(E) = 1, \ g(C) = q(S) -1 .$}
\end{itemize}

\end{theo}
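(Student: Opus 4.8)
The plan is to recognize the statement as a quantitative repackaging of the Enriques--Kodaira classification of minimal surfaces. First I would reduce to the minimal model: for $m\ge 1$ the plurigenus $P_m$ is a birational invariant, $p^{(1)}=K_S^2+1$ is a birational invariant of non-ruled surfaces, and the properties ``ruled'', ``properly elliptic'', ``of general type'' are birational. So one may assume $S$ minimal, and it then suffices to determine $P_{12}(S)$ and $K_S^2$ in terms of $Kod(S)\in\{-\infty,0,1,2\}$ and to observe that the four numerical alternatives in (I)--(IV) are pairwise incompatible and jointly exhaustive.

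Next I would run through the four Kodaira classes. If $Kod(S)=-\infty$, then $K_S$ has negative degree on a line of a ruling (or of $\PP^2$), so $P_m(S)=0$ for all $m\ge1$; conversely, by the Enriques--Kodaira classification the only minimal surfaces with $P_{12}=0$ are $\PP^2$ and the $\PP^1$-bundles over a curve, and the Noether--Enriques structure theorem supplies the birational model $C\times\PP^1$ with $g(C)=q(S)=h(S)$. If $Kod(S)=0$, the sub-classification into Abelian, K3, Enriques and hyperelliptic (bielliptic) surfaces shows that $\hol_S(K_S)$ is torsion of order $1$, $1$, $2$, respectively $m\in\{2,3,4,6\}$, hence of order dividing $12$; thus $\hol_S\cong\hol_S(12K_S)$ and $P_{12}=1$, and the four types are separated by $(p_g,q)$, with Noether's formula $12\chi(\hol_S)=K_S^2+c_2(S)$ (using $K_S^2=0$) together with surjectivity of the Albanese morphism forcing $S=\Alb(S)$ in the case $q=2$, $p_g=1$, and the Albanese fibration onto an elliptic curve handling the case $q=1$. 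If $Kod(S)=1$, then $K_S$ is numerically supported on fibres of the canonical elliptic fibration $f\colon S\to B$, so $K_S^2=0$ and $p^{(1)}(S)=1$; Kodaira's canonical bundle formula expresses $12K_S$ as $f^*(12K_B+12\mathfrak d)$ plus an effective divisor supported on the multiple fibres, with $\mathfrak d$ effective of degree $\chi(\hol_S)$, whence $P_{12}(S)\ge h^0(\hol_B(D))$ for an explicit divisor $D$ on $B$ of degree $\ge1$ (apart from finitely many exceptional fibration types, which are checked directly), giving $P_{12}\ge2$. If $Kod(S)=2$, then $K_S^2\ge1$ for a minimal surface of general type, so $p^{(1)}(S)\ge2$; and by Riemann--Roch together with the vanishing $h^i(\hol_S(12K_S))=0$ for $i>0$ (Kodaira--Ramanujam, $11K_S$ being nef and big) one gets $P_{12}(S)=\chi(\hol_S)+66\,K_S^2\ge1+66$, using the classical inequality $\chi(\hol_S)\ge1$.

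With these four computations the equivalences (I)--(IV) follow formally, since each numerical alternative has been matched with exactly one Kodaira class. The remaining descriptive clauses are classical add-ons: that $|12K_S|$ realizes the Iitaka (elliptic) fibration when $Kod=1$; that $|mK_S|$ yields a birational embedding for $m\ge5$ when $Kod=2$, which is Bombieri's theorem \cite{Bom73}; and that a properly elliptic $S$ with $p_a(S)=-1$, i.e. $\chi(\hol_S)=0$, has $\mathfrak d=0$, so that the Albanese morphism --- an elliptic fibration onto an elliptic curve, since $q=p_g+1\ge1$ and there are no multiple fibres in this degenerate case --- exhibits $S\cong C\times E$ with $g(E)=1$ and $g(C)=q(S)-1$.

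The main obstacle is that essentially all the depth resides in the inputs from the Enriques--Kodaira classification: the implication ``$P_{12}=0\Rightarrow S$ ruled'' is itself the Castelnuovo--Enriques rationality/ruledness criterion, and the identification of the $Kod=0$ surfaces with the four classical types --- which is what makes $12K_S\sim0$ true --- is the core of the classification. Granting that machinery, the one genuinely quantitative point is sharpness of the exponent $12$ in the $Kod=1$ case: one must verify the finitely many borderline elliptic fibrations (small base genus, few and ``light'' multiple fibres) by hand, so that $P_{12}\ge2$ --- rather than $P_n\ge2$ for some larger $n$ --- already holds. This is precisely the estimate that the paper's Main Theorem isolates and then transports to characteristic $p>0$.
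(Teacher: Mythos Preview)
The paper does not actually prove this theorem: it is stated as the classical result of Castelnuovo--Enriques, with references to \cite{Enr14}, \cite{CE}, and the modern textbooks. The paper's own technical contribution is the extension to characteristic $p>0$ (Theorem~\ref{12}), resting on the Main Theorem~\ref{sharp}. The surrounding discussion in the paper --- Remark following the statement (on why $12$ appears), the canonical bundle formula (Theorem~\ref{bm}), and the Crucial Theorem~\ref{crucial} --- outlines exactly the strategy you describe: reduce to a minimal model, split according to Kodaira dimension, and for $Kod=1$ extract $P_{12}\geq 2$ from the canonical bundle formula. So your approach agrees with the paper's, and you have correctly identified that the real content sits in the $Kod=1$ estimate; your phrase ``apart from finitely many exceptional fibration types, which are checked directly'' is precisely the case analysis carried out in the proof of Theorem~\ref{sharp}.

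There is, however, a genuine error in your treatment of the final clause, Case~(III) with $p_a(S)=-1$. You assert that the Albanese map is ``an elliptic fibration onto an elliptic curve, since $q=p_g+1\geq 1$''; but for $S\cong C\times E$ with $g(C)\geq 2$ one has $q(S)=g(C)+1\geq 3$, so $\Alb(S)$ is not a curve at all. Moreover, $\chi(\hol_S)=0$ does not by itself exclude multiple fibres in the canonical elliptic fibration, and even an isotrivial elliptic fibre bundle need not be a product. The correct argument (in characteristic zero) passes through the structure of the Jacobian fibration and the vanishing of the modular invariant when $\chi=0$, together with a monodromy consideration to trivialize the bundle; none of this is in your sketch. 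This is a localized gap --- the main equivalences (I)--(IV) are handled correctly --- but the last ``Moreover'' item does not follow from what you wrote.
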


A modern account of the Castelnuovo-Enriques classification of surfaces was first given in \cite{shaf} and in \cite{kodclass},
then it appeared also in  \cite{bh}, \cite{bea} (this is the only text which mentions
the $P_{12}$-theorem, in the historical note on page 118), later also in  \cite{badescu} and \cite{bpv}.

\begin{rem}

i) Nowadays, cases (I)-(IV) are distinguished according to the Kodaira dimension, which is defined to be 
$- \infty$ if all the plurigenera vanish
 ($P_n = 0 \  \forall n \geq 1$),
otherwise it is defined as the maximal dimension of the image of some $n$-pluricanonical map (the map associated to $H^0(\hol_X(nK_X))$).

ii) The occurrence of the number $12$ is rather miracolous: it first appears since, by the canonical divisor formula \ref{bm}, 
in case (II)  the equation $$2 = \sum_j (1 - \frac{1}{m_j})  $$
admits only the following (positive)  integer solutions:
$$ (2,2,2,2) ,(3,3,3), (2,4,4), (2,3,6) $$
and then we get a set of integers $m_j$ whose least common multiple is precisely $12$.

According to the several cases we have $ 2K_S \equiv 0, 3 K_S \equiv 0, 4 K_S \equiv 0, 6 K_S \equiv 0, $
where $D \equiv 0$ means that that $F$ is linearly equivalent to zero,
i.e. $ \hol_S(D) \cong \hol_S$. It follows that in case (II) we have $ 12K_S \equiv 0$, hence $P_{12}=1$.

The second occurrence  is more subtle, and is the heart of the theorem:  in case (III) one has $P_{12} \geq 2$.
\end{rem}

It is now customary (the name 'key theorem' is due to  \cite{bea})  to see the two major steps of surface classification as follows:

\begin{theo} {\bf ( Key Theorem)}
If $S$ is minimal, then

$K_S$ is {\bf nef} (i.e. , $ K_S \cdot C \geq 0$ for all curves $ C \subset S$)
$\iff$ $S$ is nonruled.
\end{theo}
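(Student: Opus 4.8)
The plan is to prove the two implications separately, handling the substantive one by contraposition. For the direction \emph{$K_S$ nef $\Rightarrow$ $S$ nonruled}, equivalently \emph{$S$ ruled $\Rightarrow$ $K_S$ not nef}, I would invoke the structure theory of minimal ruled surfaces, valid in every characteristic: such an $S$ is either $\PP^2$ or a $\PP^1$-bundle $\pi\colon S\to B$ over a smooth curve. In the first case $-K_S=\hol_{\PP^2}(3)$ is ample; in the second, adjunction on a fibre $F$ (with $F^2=0$ and $p_a(F)=0$) gives $K_S\cdot F=-2$. Either way $K_S$ fails to be nef.

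For the reverse implication, suppose $K_S$ is not nef, so that some irreducible curve has negative intersection against $K_S$. The quickest route is Mori's bend-and-break, which holds over $k$ of any characteristic (Mori's own argument passes through characteristic $p$): there is a rational curve $\ell\subset S$ spanning a $K_S$-negative extremal ray $R$ with $0<-K_S\cdot\ell\le 3$, together with its contraction $\varphi_R\colon S\to Y$. One then runs through the values of $\dim Y$. If $\dim Y=2$ then $\varphi_R$ contracts a $(-1)$-curve, contradicting minimality. If $\dim Y=1$ then $\varphi_R$ is a fibration over a curve with $-K_S$ relatively ample; its general fibre $F$ is an integral curve with $F^2=0$ and $-K_S\cdot F>0$, so $2-2p_a(F)=-K_S\cdot F>0$ forces $p_a(F)=0$, $F\cong\PP^1$, and $S$ is geometrically ruled over $Y$. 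If $\dim Y=0$ then $-K_S$ is ample and $\rho(S)=1$, so $S$ is a del Pezzo surface of Picard number one, i.e.\ $S\cong\PP^2$. In all three cases $S$ is ruled, as wanted.

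A more classical route, closer to the Bombieri--Mumford machinery on which the paper rests, avoids the cone theorem. Picking $C$ irreducible with $K_S\cdot C<0$, the adjunction relation $C^2=2p_a(C)-2-K_S\cdot C$ together with minimality (which forbids $C^2=K_S\cdot C=-1$) forces $C^2\ge 0$, and $C^2=0$ forces $p_a(C)=0$. Riemann--Roch then makes $h^0(\hol_S(nC))$ tend to infinity: one kills $h^2(\hol_S(nC))=h^0(\hol_S(K_S-nC))$ because an effective $K_S-nC$ would meet $C$ negatively, hence contain $C$, hence, upon iterating, contain arbitrarily large multiples of $C$, contradicting positivity against an ample class. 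Thus $|nC|$ is large: when $C^2=0$ it yields (after Stein factorisation) a fibration $S\to B$ with $C$ inside a fibre, and $K_S\cdot C<0$ forces the general fibre to be rational, whence $S$ is ruled by Noether--Enriques (over $k(B)$ one uses Tsen's theorem to get a rational point on the genus-zero generic fibre); when $C^2>0$ the same support argument applied to $mK_S$ gives $P_m(S)=0$ for all $m\ge 1$, and one concludes $S$ is rational via Castelnuovo's criterion if $q(S)=0$, and via the Albanese fibration — whose base is forced to be a curve, since otherwise $p_g(S)\ge1$ — plus the canonical bundle formula, which then forces the general fibre to be rational, if $q(S)>0$.

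The main obstacle is precisely the case $C^2>0$: excluding surfaces of Kodaira dimension $\ge 0$ that nonetheless carry a $K_S$-negative curve. In the Mori-theoretic presentation this difficulty is absorbed into bend-and-break and the classification of extremal contractions of surfaces; in the classical presentation it requires the full theory of elliptic and quasi-elliptic fibrations in characteristic $p$ — notably the control of multiple and wild fibres in the canonical bundle formula — which is exactly the content of \cite{BM77}, \cite{bm3}, \cite{Ray76}. In the write-up I would therefore carry out the Mori-theoretic argument and cite Bombieri--Mumford for the characteristic-$p$ validity of the inputs.
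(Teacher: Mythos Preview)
The paper does not actually prove the Key Theorem: it is stated in Section~1 as background, part of the classical picture that the authors take over from \cite{mum1}, \cite{BM77}, \cite{bm3}. So there is no ``paper's own proof'' to compare against; the result is being quoted, not established.

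Your proposal is nonetheless a correct proof, and it is worth noting that your preferred Mori-theoretic route is genuinely different from the way Mumford originally proved this in \cite{mum1}. Mumford's argument is closer to your ``classical route'': he shows directly that if $S$ is minimal and not ruled then $K_S$ is nef, by analysing a putative curve $C$ with $K_S\cdot C<0$ via adjunction and Riemann--Roch, and then feeding the case $P_m=0$ for all $m$ into a characteristic-$p$ version of Castelnuovo's criterion together with the Albanese map. Your cone-theorem argument short-circuits most of this, at the cost of importing bend-and-break; what it buys is a clean trichotomy on $\dim Y$ that avoids the delicate $C^2>0$, $q>0$ case you correctly flag as the weak point of the classical approach.

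One small caution on your classical sketch: in the $C^2>0$, $q>0$ branch, invoking ``the canonical bundle formula'' to force the general Albanese fibre to be rational is not quite right as stated---that formula is for fibrations with fibres of arithmetic genus~$1$, not for arbitrary Albanese fibrations. What one actually uses there (as in \cite{mum1}) is that $P_m=0$ for all $m$ forces, via the structure of the Albanese fibration and Castelnuovo's criterion applied to the generic fibre or a suitable base change, the fibres to be rational. Since you declare you would write up the Mori-theoretic version anyway, this is not a defect in your final argument, only in the alternative sketch.
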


\begin{theo} {\bf ( Crucial  Theorem)}\label{crucial}
 $S$ is minimal, with $p_g (S) =0, q(S) =1$

$\iff$ $S$ is {\bf isogenous} to an elliptic  product, i.e. $S$ is the quotient  $(C_1 \times C_2)/G$ of a product of  curves
of genera $$g_1: = g (C_1) =1 , g_2: = g (C_2) \geq 1,$$
 by a free action of a finite group of product type
($G$ acts faithfully on $C_1, C_2$ and we take the diagonal action $ g (x,y) : = ( g x, gy)$),
and such that moreover  if we denote by $g'_j = g (C_j /G)$, then $g'_1 + g'_2 = 1$.

More precisely, let $A$ be the Albanese variety of $S$, which is an elliptic curve and let 
$$ \alpha : S \ra A$$
be the Albanese map.

Then either:

1) $S$ is a hyperelliptic surface, $(C_1 \times C_2)/G$, $g_2 = 1$,
 $G$ is a subgroup of translations of $C_1$, $ A = C_1 / G$,
 while $C_2 / G \cong \PP^1$.
 
 In this case all the fibres of the Albanese map are isomorphic to $C_2$, $P_{12}(S) = 1$ and  $S$ admits also an elliptic fibration
 $\psi : S \ra C_2 / G \cong \PP^1$.
 
 2)  $S$ is properly elliptic ($P_{12}(S) \geq  2$) and the genus $g = g_2 $ of the Albanese fibres satisfies $g_2 \geq 2$: again
 $G$ is a subgroup of translations of $C_1$, $ A = C_1 / G$,
 $C_2 / G \cong \PP^1$,  all the fibres of the Albanese map are isomorphic to $C_2$.
 
 3) $S$ is properly elliptic ($P_{12}(S) \geq  2$) and the genus $g = g_1  $ of the Albanese fibre satisfies $g_1=1$:
 $ A = C_2 / G$, $C_1 / G \cong \PP^1$, and the fibres of the Albanese map
 $$ \alpha : S = (C_1 \times C_2)/G  \ra A = C_2 / G $$
 are either isomorphic to  the elliptic curve $C_1$ or are multiples of a smooth elliptic curve isogenous to $C_1$.
 
 \begin{rem}\label{mon}
 A crucial observation, used by Enriques in \cite{Enr14} for the $P_{12}$-theorem  is that in the first two cases the group $G$ is Abelian.
 The crucial ingredient is the canonical divisor (canonical bundle) formula, established by Enriques, Kodaira, and then extended to positive characteristic 
 by Bombieri and  Mumford.

 \end{rem}
 
 \begin{theo}\cite[p.27 Theorem 2.]{BM77}\label{bm}
	Let $f:S\rightarrow C$ be a relatively minimal fibration such that the arithmetic genus of a fibre equals $1$ (the general fibre is necessarily
	smooth elliptic in characteristic zero, but it can be  rational with one cusp in characteristic $2,3$: the latter  is called the quasi-elliptic case).

	 Let $\{q_1,...q_r\}\subset C$ the set of points over which the fibre $f^{-1}(q_i)=m_iF'_i$ is a multiple fibre (i.e. $m_i\geq 2$ and $F'_i$ is not a multiple of any 
	 proper sub-divisor), and consider
	 the  coherent sheaf $R^1f_*(\mathcal{O_S})$ on the smooth curve $C$, which decomposes as
$$R^1f_*(\mathcal{O}_S)= \hol_C(L) \oplus T,$$
where $\hol_C(L) $ is an invertible sheaf and $T$ is a torsion subsheaf with $ supp (T) \subset  \{q_1,...q_r\} $.
The fibres over the points of $ supp (T)$ are called {\bf wild fibres}, moreover 
 $T=0$ if $char (k) = 0$.

	 Then
	$$ K _S=f^*( \delta ) + \sum_{i=1}^{r}a_iF'_i, \delta :  = - L +  K_C$$
	where
	\begin{itemize}
		\item[(i)] $0\leq a_i<m_i$;
		\item[(ii)] $a_i=m_i-1$ if $m_iF'_i$ is not wild (i.e.,  $q_i \notin supp (T)$);
		\item[(iii)] $d : = \deg \ (\delta) = \deg  \ (- L +  K_C) =2g(C)-2+\chi(\mathcal{O}_S)+length (T)$,
		
		 where $g(C)$ is the genus of $C$. 
	\end{itemize}
\end{theo}

\end{theo}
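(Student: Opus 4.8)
The plan is to prove both implications, dispatching the elementary direction first and concentrating the effort on the structure theorem.

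\emph{The direction ``$\Leftarrow$''.} Suppose $S = (C_1 \times C_2)/G$ with $G$ acting freely and diagonally, $g_1 = g(C_1) = 1$, $g_2 \geq 1$, and $g'_1 + g'_2 = 1$. Freeness makes $\pi\colon C_1 \times C_2 \to S$ an \'etale Galois cover with group $G$, so $S$ is smooth and $\chi(\mathcal{O}_S) = \chi(\mathcal{O}_{C_1\times C_2})/|G| = (1-g_1)(1-g_2)/|G| = 0$. Pulling cohomology back to the cover and taking $G$-invariants (K\"unneth for $H^\ast(\mathcal{O})$ and for $H^1_{et}(-,\QQ_\ell)$) I would read off $h^1(\mathcal{O}_S) = g'_1 + g'_2 = 1$ and $q(S) = \tfrac12 b_1(S) = g'_1 + g'_2 = 1$; hence $p_g(S) = \chi(\mathcal{O}_S) - 1 + h^1(\mathcal{O}_S) = 0$. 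Minimality follows because $K_{C_1\times C_2}$ is nef (both genera $\geq 1$) and nefness of $K$ descends along the finite surjective $\pi$, so $S$ carries no curve of the first kind.

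\emph{The direction ``$\Rightarrow$'': setup and numerics.} Let $S$ be minimal with $p_g = 0$, $q = 1$; I may assume $S$ is non-ruled (a $\mathbb{P}^1$-bundle over the elliptic base must be excluded, as it has $p_g=0,\,q=1$ but is not such a quotient), so by the Key Theorem $K_S$ is nef. From $p_g = 0$ and the inequalities $h \geq q$ and $2p_g \geq 2(h-q) \geq 0$ one gets $h = q = 1$, whence $\chi(\mathcal{O}_S) = 1 - h + p_g = 0$. Since $q = 1$, $A := \Alb(S)$ is an elliptic curve and the Albanese map $\alpha\colon S \to A$ is surjective; using Stein factorisation together with the universal property of the Albanese I would show $\alpha$ has connected fibres over $A$, with general fibre $F$ a smooth curve of genus $g \geq 1$ (genus $\geq 1$ by non-ruledness). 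The relative-dualizing-sheaf formula $\chi(\mathcal{O}_S) = \deg \alpha_\ast\omega_{S/A} + (1-g(A))(1-g)$ with $g(A) = 1$ then gives the key numerical input $\deg \alpha_\ast\omega_{S/A} = 0$.

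\emph{The crux: isotriviality.} The heart of the proof is to deduce from $\deg \alpha_\ast\omega_{S/A} = 0$ that $\alpha$ is isotrivial, i.e.\ all smooth fibres are mutually isomorphic, and this is exactly where positive characteristic bites: the semipositivity of $\alpha_\ast\omega_{S/A}$ that yields isotriviality for free in characteristic $0$ can fail in characteristic $p$, so I expect this to be the main obstacle. I would split by fibre genus. For $g = 1$ the Albanese map is an elliptic (or, in characteristic $2,3$, possibly quasi-elliptic) fibration, and here the canonical bundle formula (Theorem \ref{bm}) applies with $C = A$: it yields $\deg \delta = 2g(A)-2+\chi(\mathcal{O}_S)+\mathrm{length}(T) = \mathrm{length}(T)$, from which I would control the $j$-invariant and the wild fibres and conclude constancy of the modulus, hence isotriviality. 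For $g \geq 2$ one must instead argue isotriviality by a rigidity statement tailored to char $p$, replacing naive semipositivity; this higher-genus case is the genuinely hard one.

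\emph{From isotriviality to the quotient, and the trichotomy.} Once $\alpha$ is isotrivial I would invoke the standard description of isotrivial fibrations: there is a finite group $G \hookrightarrow \Aut(F)$ and a $G$-Galois cover $C \to A$, \'etale precisely where $\alpha$ is smooth, with $S \cong (C \times F)/G$; smoothness of $S$ forces the diagonal $G$-action to be free (point stabilisers on $C$ must act freely on $F$). Then $\chi(\mathcal{O}_S) = (1-g(C))(1-g)/|G| = 0$ forces $g(C) = 1$ or $g = 1$, so after relabelling one factor has genus $1$ and I set $g_1 = 1$. If $g \geq 2$, then $g(C) = 1$ and Riemann--Hurwitz for $C \to C/G = A$ (both elliptic) forces $C \to A$ \'etale with $G$ a group of translations; the identity $q = g'_1 + g'_2 = 1 + g(C_2/G)$ then gives $C_2/G \cong \mathbb{P}^1$, which is case 2. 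If $g = 1$, the Albanese fibres are elliptic and the canonical bundle formula distinguishes $\mathrm{Kod}(S) = 0$ (no multiple/wild contribution, both curves elliptic, $C\to A$ \'etale: the hyperelliptic case 1) from $\mathrm{Kod}(S) = 1$ (case 3), where the multiple fibres correspond to the branch points of the cover and the special Albanese fibres are multiples of an elliptic curve isogenous to $C_1$. In every case $g'_1 + g'_2 = q = 1$, completing the identification.
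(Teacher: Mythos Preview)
The paper does not prove this theorem. It is stated in Section~1 as part of the classical characteristic-zero background (alongside the Key Theorem and the Castelnuovo--Enriques $P_{12}$-theorem), with no argument given; the paper's own contribution begins in Section~2 and consists precisely in proving $P_{12}\geq 2$ in positive characteristic \emph{without} passing through any such structure result, working instead directly from the canonical bundle formula and the Raynaud/Katsura--Ueno lemmas. There is therefore no proof in the paper against which to compare your attempt.

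On the sketch itself: you correctly observe that the biconditional fails as literally stated unless one excludes ruled surfaces (a $\mathbb{P}^1$-bundle over an elliptic curve is minimal with $p_g=0$, $q=1$); this is an implicit hypothesis the paper leaves unwritten. Your extended discussion of char-$p$ obstacles is off-target, since the statement sits in the characteristic-zero review and the paper's whole strategy in char $p$ is exactly to avoid needing it. Restricted to characteristic zero, your outline---Albanese fibration, $\deg\alpha_*\omega_{S/A}=0$, isotriviality, then the standard quotient description of an isotrivial fibration---is the classical route, though the isotriviality step in both fibre-genus cases (semipositivity of $\alpha_*\omega_{S/A}$ for $g\geq 2$, constancy of the functional invariant for $g=1$) is gestured at rather than argued.
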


Let us see how the above applies  in characteristic zero and in  the special
subcase: $p_g=0, q=1$,    genus of the Albanese fibres equal $1$, there exist multiple fibres.  

Then,  for $n =2$, since  we have $ \deg (\delta) = 0$, follows that 
$$ 2 K_S =   \sum_{i=1}^{r} (m_i - 2 )F'_i + f^*( \delta  +  \sum_{i=1}^{r} q_i).$$ 
The divisor $ \delta  +  \sum_{i=1}^{r} q_i$ is effective by the Riemann Roch theorem on the elliptic curve $A$,
so we have written $2 K_S$ as the sum of two effective divisors.

 Hence we obtain that $P_2 \geq 1$,
and similarly  one gets that $P_{12} \geq 6$.

\section{The $P_{12}$-theorem in positive characteristic}

The extension of the Castelnuovo-Enriques  classification of surfaces to the case of  positive characteristic was achieved  by D. Mumford and E. Bombieri (cf. \cite[Section 3]{bm3}, \cite[Theorem 1.]{BM77}).

In a remarkable series of three papers   they  got  most of the following  full result.

\begin{theo} {\bf ( $P_{12}$-theorem in positive characteristic )}\label{12}

Let $S$ be a projective smooth  surface defined over an algebraically closed field $k$ of characteristic $p >0$,
and let  $ p^{(1)} (S): = K_S^2  + 1$ be the linear genus of a minimal model in the birational equivalence class of $S$. Then 
\begin{itemize}
	\item[(I)] $ P_{12}(S)=0 \iff $ {\it $S$ is ruled} $\iff $ {\it $S$ is birational to a product $ C \times \PP^1 $,  $g(C) = q(S) = h(S)$}.
	\item[(II)] $P_{12}=1  \iff  \hol_S \cong \hol_S ( 12 K_S)   $.
	\item[(III)] $ P_{12}\geq 2$ and $p^{(1)} (S)=1  \iff ${\it $S$ is properly elliptic or properly quasi-elliptic, i.e.  $H^0(S, \hol_S ( 12 K_S))$  yields a fibration over a curve with
	general  fibres either elliptic curves or rational curves with one cusp.}
	\item[(IV)] $P_{12}\geq 2$ and $p^{(1)} (S) > 1 \iff $ {\it $S$ is of general type, i.e. $H^0(S, \hol_S ( m K_S))$ yields a birational  embedding of $S$
	for $m $ large (indeed, $ m \geq 5$ suffices).}
\end{itemize}

\end{theo}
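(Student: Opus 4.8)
The plan is to deduce Theorem \ref{12} from the Main Theorem stated in the introduction, together with the classification results of Bombieri and Mumford in characteristic $p$ (\cite{bm3}, \cite{BM77}). First I would reduce to a minimal model: the plurigenera $P_n(S)=h^0(\hol_S(nK_S))$ are birational invariants of smooth projective surfaces, and $p^{(1)}(S)=K_S^2+1$ is by definition computed on a minimal model, which is unique when $S$ is not ruled, the only case in which $p^{(1)}$ is invoked. So it suffices to treat $S$ minimal. For $S$ minimal, the Key Theorem together with Bombieri--Mumford's structure theory partitions the minimal surfaces into four mutually exclusive classes: $K_S$ not nef, i.e. $S$ ruled ($Kod=-\infty$); $K_S$ nef with $K_S^2=0$ and $K_S$ numerically trivial ($Kod=0$); $K_S$ nef with $K_S^2=0$ and $K_S$ not numerically trivial ($Kod=1$); and $K_S$ nef with $K_S^2>0$ ($Kod=2$). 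The strategy is then to prove, in each class, the relevant \emph{forward} implication for $(P_{12}(S),p^{(1)}(S))$; the equivalences of (I)--(IV) follow formally from mutual exclusivity, since $P_{12}(S)\geq 1$ excludes the ruled class, $p^{(1)}(S)=1$ versus $p^{(1)}(S)>1$ separates $K_S^2=0$ from $K_S^2>0$, and within $K_S^2=0$ the dichotomy $P_{12}(S)=1$ versus $P_{12}(S)\geq 2$ separates $Kod=0$ from $Kod=1$.

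Three of the four forward implications are classical or already available. If $S$ is ruled it is birational to $C\times\PP^1$, so $P_n(S)=0$ for all $n\geq 1$, while $g(C)=q(S)=h(S)$ is read off the $\PP^1$-bundle structure: this is (I). If $K_S$ is numerically trivial, then by Bombieri--Mumford's list ($S$ Abelian, K3, Enriques, or hyperelliptic) one has $mK_S\equiv 0$ for some $m\in\{1,2,3,4,6\}$ --- exactly the multiplicities $m_j$ produced by the canonical bundle formula \ref{bm} in the cases $(2,2,2,2)$, $(3,3,3)$, $(2,4,4)$, $(2,3,6)$ discussed in Remark \ref{mon}, whose least common multiple is $12$ --- hence $\hol_S\cong\hol_S(12K_S)$, so $P_{12}(S)=1$ and $p^{(1)}(S)=1$: this is (II). If $S$ is of general type, then $p^{(1)}(S)=K_S^2+1>1$, and since $K_S$ is big and nef one has $h^2(nK_S)=h^0((1-n)K_S)=0$ for $n\geq 2$, so Riemann--Roch gives $P_n(S)\geq\chi(\hol_S)+\binom{n}{2}K_S^2$; combined with the known lower bound for $\chi(\hol_S)$ of minimal surfaces of general type in characteristic $p$ this forces $P_{12}(S)\geq 2$, and the birationality of $|mK_S|$ for $m\geq 5$ is the positive-characteristic pluricanonical theory of Bombieri, Mumford and Ekedahl: this is (IV).

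The only class carrying genuinely new input is $Kod(S)=1$: here $K_S^2=0$ gives $p^{(1)}(S)=1$, and the inequality $P_{12}(S)\geq 2$ is precisely statement (1) of the Main Theorem, which I may assume. For the sharper claim that $H^0(\hol_S(12K_S))$ yields the elliptic or quasi-elliptic fibration, I would argue as follows. Writing $12K_S=M+Z$ with $Z$ the fixed part and $M$ the moving part, intersection with the nef divisor $K_S$ and $K_S^2=0$ force $M\cdot K_S=Z\cdot K_S=0$, whence $M^2+M\cdot Z=M\cdot 12K_S=0$; since $M^2\geq 0$ and $M\cdot Z\geq 0$ one gets $M^2=0$, and with $\dim|M|\geq 1$ (guaranteed by $P_{12}(S)\geq 2$) this means $|M|$ is composed with a pencil defining a morphism onto a curve. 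A general member $F$ of that pencil satisfies $F\cdot K_S=0$ and $F^2=0$, hence has arithmetic genus $1$ by adjunction; by uniqueness of the fibration of Kodaira dimension $1$ it is the canonical (quasi-)elliptic fibration $f:S\to C$ of Theorem \ref{bm}. This completes (III), and with it all four cases.

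The main obstacle I anticipate is not the case-splitting, which is bookkeeping over established structure theory, but two finer points. The first is making the statement in (III) literally about the \emph{twelfth} pluricanonical map: the Katsura--Ueno method and part (4) of the Main Theorem most naturally control $|nK_S|$ for $n$ large, so some care is needed to descend to $n=12$ using only $P_{12}\geq 2$. The second is securing $P_{12}(S)\geq 2$ uniformly in the general type case, where $\chi(\hol_S)$ can a priori be small in characteristic $p$ and one must invoke the char-$p$ surface-geometry bounds of Bombieri, Mumford and Ekedahl rather than Noether's inequality. Of course the true heart of the matter --- proving part (1) of the Main Theorem in characteristic $p$, via the canonical bundle formula, the analysis of multiple and wild fibres, and the four arithmetic cases above --- is assumed here and constitutes the real work of the paper.
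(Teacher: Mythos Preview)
Your proposal is correct and follows exactly the paper's strategy: Theorem \ref{12} is deduced from Bombieri--Mumford's classification in characteristic $p$ (\cite{mum1}, \cite{BM77}, \cite{bm3}) together with part (1) of the Main Theorem, which supplies the only genuinely new ingredient, namely $P_{12}\geq 2$ when $\mathrm{Kod}(S)=1$. Two minor remarks on points you yourself flagged: in case (II) the Bombieri--Mumford list in characteristic $p$ is larger than the one you wrote (it also contains quasi-hyperelliptic surfaces in characteristics $2,3$ and non-classical Enriques surfaces in characteristic $2$), but $12K_S\equiv 0$ continues to hold in every subcase by \cite{bm3}; and your worry about securing $P_{12}\geq 2$ in case (IV) is not a real obstacle, since Ekedahl's theorem that $|5K_S|$ is already birational onto its image forces $P_5\geq 3$, hence a fortiori $P_{12}\geq 2$.
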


Moreover, Bombieri and Mumford  in \cite{BM77} and \cite{bm3} gave a full description of the surfaces in the classes (I) and (II) (with new non classical surfaces), but 
classes (II) and (III) were not distinguished by the behaviour of the 12-th plurigenus, but only by 
the Kodaira dimension, i.e., by the growth of $P_n(S)$ as $ n \ra \infty$.

The sharp statement ($ \forall m \geq 5$) in case $(IV)$, established by Bombieri  \cite[Main Theorem]{Bom73} in characteristic zero,
was extended by  T. Ekedahl's  to the case of  positive characteristic (cf. \cite[Main Theorem]{Eke88}, see also \cite{cf}
and \cite{4authors} for a somewhat simpler proof.

\section{Auxiliary results and proof of the $P_{12}$-theorem}

Case $(III)$ can be divided into two subcases: properly elliptic fibrations and properly quasi-elliptic fibrations.

Recall the definition of quasi-elliptic surfaces:

\begin{defin}
  A {\em quasi-elliptic} surface $S$ is a nonsingular projective surface admitting a fibration $f:S\rightarrow C$ over a nonsingular projective curve $C$ such that $f_*\mathcal{O}_X=\mathcal{O}_C$ and such that the general fibres of $f$ are rational curves with one cusp.\\
  If the fibration $f$ is induced by $H^0(S, \hol_S (n K_S))$ for some $ n > 0$, we call $S$ a {\em properly quasi-elliptic surface}.
\end{defin}
\begin{remark}
	1) By a result of J. Tate (cf. \cite{Tat52}), quasi-elliptic fibrations only appear in characteristic 2 and 3.
	
	2) in case (III), where $P_n (S) : = dim H^0(S, \hol_S (n K_S))$ grows linearly with $n$, $S$ is necessarily properly elliptic or properly quasi-elliptic.
\end{remark}

The case  where $S$ admits a  properly elliptic fibration was treated  by T. Katsura and K. Ueno who proved in \cite[Theorem 5.2.]{KU85}  that for any properly elliptic surface $S$,  $\forall m\geq 14$, $P_m(S)\geq 2$ and showed the
existence of  an example where $P_{13}=1$. 
They show that the situation is essentially the same as in characteristic zero.    The fact  that  $P_{12}(S)\geq 2$ follows from
our more general  theorem, which uses several  auxiliary results developed by Raynaud and   Katsura-Ueno
(they  will be recalled in the sequel).

\begin{theorem}\label{sharp}{\bf (Main Theorem)}
Let $f:S\rightarrow C$ be a properly elliptic or quasi-elliptic fibration. Then

\begin{enumerate}
\item
$P_{12}(S)\geq 2$
\item
there exists $n \leq 4$ such that $P_n (S) \neq 0$
\item
there exists $n \leq 8$ such that $P_n (S) \geq 2$
\item
$\forall n \geq 14$  $P_n (S) \geq 2$.

\end{enumerate}

\end{theorem}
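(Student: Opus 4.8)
The plan is to push everything through the canonical bundle formula \ref{bm} and to reduce the four assertions to estimates for the degree of an explicit divisor on the base curve. Since plurigenera are birational invariants, I first replace $S$ by its minimal model and $f$ by the relatively minimal model of the fibration, so that $K_S^2=0$, $K_S$ is nef, $\chi(\hol_S)\ge 0$ and $\mathrm{Kod}(S)=1$. Writing the multiple fibres as $f^{-1}(q_i)=m_iF'_i$ $(1\le i\le r)$, $\delta=K_C-L$, $d=\deg\delta=2g(C)-2+\chi(\hol_S)+\mathrm{length}(T)$ and $0\le a_i<m_i$, formula \ref{bm} gives for every $n\ge 1$
$$nK_S=f^*(D_n)+(\text{effective divisor supported on fibres}),\qquad D_n:=n\delta+\sum_{i=1}^r\Big\lfloor\frac{na_i}{m_i}\Big\rfloor q_i ,$$
hence $\deg D_n=nd+\sum_i\lfloor na_i/m_i\rfloor$, and by the plurigenus formula of Katsura--Ueno (which I recall below, and which itself rests on Raynaud's analysis of wild fibres) $P_n(S)=h^0(C,\hol_C(D_n))$. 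Riemann--Roch on $C$ then yields $P_n(S)\ge\deg D_n+1-g(C)$, with equality once $\deg D_n\ge 2g(C)-1$, and for $g(C)=0$ in fact $P_n(S)=\max(0,\deg D_n+1)$; also $\mathrm{Kod}(S)=1$ is equivalent to positivity of the slope $\lambda:=d+\sum_i a_i/m_i$ (since $n\lambda-r\le\deg D_n\le n\lambda$, and the pluricanonical maps always factor through $f$ because $K_S$ is numerically trivial on a general fibre). Thus the task becomes a finite case analysis over $g(C)$ and, for $g(C)\le 1$, over $d$ and the multiple-fibre data $(m_i,a_i)$.

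For $g(C)\ge 2$ one has $d\ge 2$, hence $\deg D_n\ge n(2g(C)-2)\ge 2g(C)-1$ for $n\ge 2$, so $P_n(S)=\deg D_n+1-g(C)\ge 3g(C)-3\ge 3$ for every $n\ge 2$ and all four statements are immediate. For $g(C)=1$, $d=\chi(\hol_S)+\mathrm{length}(T)\ge 0$: if $d\ge 1$ then $\deg D_n\ge n\ge 2$ for $n\ge 2$; if $d=0$ there are no wild fibres, $a_i=m_i-1$, and $\lambda>0$ forces $r\ge 1$, so (using $n-\lceil n/m_i\rceil\ge 1$ for $m_i\ge 2,\ n\ge 2$) $\deg D_n\ge r\ge 1$ for $n\ge 2$, the only configurations needing a direct look being a single multiple fibre of multiplicity $2$ or $3$, where $\deg D_n=\lfloor n/2\rfloor$, resp. $n-\lceil n/3\rceil$, so that $\deg D_{12}\ge 6$ and $\deg D_n\ge 2$ for $n\ge 14$. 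Then I turn to $g(C)=0$, where $d=-2+\chi(\hol_S)+\mathrm{length}(T)$. If $d\ge 1$, $\deg D_n\ge n$. If $d=0$, $\lambda>0$ forces $r\ge 1$, so in the tame case $\deg D_n=\sum_i(n-\lceil n/m_i\rceil)\ge r\ge 1$ for $n\ge 2$ with $n-\lceil n/m_i\rceil\ge\lfloor n/2\rfloor$, giving $\deg D_2\ge 1$, $\deg D_{12}\ge 1$ and $\deg D_n\ge 1$ for $n\ge 14$; a wild fibre is pinned down by Raynaud's bounds ($p\mid m_i$, the $p$-part of $m_i$ bounded by the torsion length, $a_i$ restricted) tightly enough that the same conclusions hold. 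If $d=-1$, then $\sum a_i/m_i>1$ forces $r\ge 2$; the extremal tame configuration is $r=2$, $(m_1,m_2)=(2,3)$, where $\deg D_n=n-\lceil n/2\rceil-\lceil n/3\rceil$, so $P_2=P_3=P_4=1$, $P_6=2$, $P_{12}=3$ and $\deg D_n\ge 1$ for all $n\ge 8$, and every other configuration (more fibres, larger multiplicities, or a Raynaud-controlled wild fibre) dominates it.

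The decisive case is $d=-2$, $g(C)=0$: here $\chi(\hol_S)=\mathrm{length}(T)=0$, so $p_g(S)=0$, and since $h^1(\hol_S)=1-\chi(\hol_S)=1>q(S)$ would force $p_g(S)\ge 1$, one gets $q(S)=1$; by the Crucial Theorem \ref{crucial}, $S$ is isogenous to an elliptic product, and as the (unique) elliptic fibration $f$ has base $\PP^1$ this is case 2) there (in case 3) the elliptic fibration is the Albanese pencil over an elliptic base), so by Remark \ref{mon} the group $G$ is \emph{abelian} and the $m_i$ are the orders of the abelian stabilisers of a genus-$0$ quotient with $\sum_i(1-1/m_i)>2$. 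Such a list of multiplicities must satisfy, for each prime $\ell$, that the largest power of $\ell$ dividing one of the $m_i$ is attained for at least two of them (otherwise the $\ell$-part of that generator cannot be cancelled); running over the finitely many minimal admissible lists — $(4,4,4)$, $(2,6,6)$, $(2,2,3,3)$, $(2,2,2,2,2)$, $\ldots$ — a direct floor-function computation gives (1)--(4), in particular $P_{12}(S)\ge 2$ and $P_n(S)\ge 2$ for $n\ge 14$. The forbidden lists such as $(2,3,7)$ or $(2,3,m)$ with $m\ge 7$ — which would make $\deg D_{12}=0$, or make $P_n=0$ for all $n\le 4$ — are excluded precisely because no finite abelian group has generators of orders $2,3,7$ (or $2,3,m$) whose product is $1$; this is the concrete incarnation here of the observation in Remark \ref{mon}. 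The quasi-elliptic case ($\mathrm{char}\,k\in\{2,3\}$) is handled identically: formula \ref{bm} applies verbatim to a cuspidal general fibre, and the extra constraints on quasi-elliptic fibrations only shorten the list of admissible configurations.

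The main obstacle will be the small-$d$ subcases in characteristics $2$ and $3$. There one needs two inputs with no counterpart in the classical proof: that the plurigenus formula $P_n(S)=h^0(C,\hol_C(D_n))$ survives the presence of \emph{wild} fibres, and that Raynaud's quantitative bounds on $m_i$, $a_i$ and the length of the torsion at a wild fibre — used together with the abelianity of $G$ furnished by \ref{crucial} and \ref{mon} — are sharp enough to force the extremal configurations to be exactly those listed. Granted these, which are recalled in what follows, the rest is the elementary bookkeeping indicated above over a finite list of configurations.
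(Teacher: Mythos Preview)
Your overall architecture --- canonical bundle formula, push down to a divisor $D_n$ on $C$, then case-split on $g(C)$ and $d$ --- is exactly the paper's, and your treatment of $g(C)\ge 1$ and of the easy $g(C)=0$ cases is fine. But there are two genuine gaps in the hard cases.

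First, in the decisive case $g(C)=0$, $d=-2$ (the paper's Case (4)), you invoke the Crucial Theorem \ref{crucial} and Remark \ref{mon} to conclude that $S$ is isogenous to an elliptic product with \emph{abelian} $G$, and then read off the multiplicity constraints from that. But \ref{crucial} is a characteristic-zero statement: it sits in Section~1, which summarises the classical theory, and its proof (smoothness and isotriviality of the Albanese pencil, Riemann's existence theorem for the quotient cover) does not carry over to char $p>0$. The paper deliberately avoids \ref{crucial} here and uses instead Theorem \ref{Mon} (the Katsura--Ueno conditions $U_i$), which is proved directly for elliptic fibrations over $\PP^1$ with $\chi(\hol_S)=0$ in every characteristic. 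The numerical constraint you extract --- ``the top $\ell$-power among the $m_i$ is attained at least twice'' --- is precisely what the $U_i$ give (e.g.\ for $r=3$ they say $m_k\mid\mathrm{lcm}(m_i,m_j)$), so your subsequent floor-function bookkeeping survives verbatim; but the justification has to go through \ref{Mon}, not through \ref{crucial}.

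Second, your handling of wild fibres is not a proof. For $g(C)=0$ with $d\in\{-1,0\}$ and $t\ge 1$ (the paper's Cases (1)--(3)) you assert that ``a wild fibre is pinned down by Raynaud's bounds tightly enough'' and that the wild configurations ``dominate'' the tame extremal one. This is false as stated: at a wild fibre one may have $a_i=m_i-1-\nu_i$, or even $a_i=m_i-1-2\nu_i$ or $m_i-1-(p+1)\nu_i$ when $t_i=2$ (Corollary \ref{KU}), so $a_i/m_i$ can be \emph{strictly smaller} than in the tame case and the wild configuration does not dominate anything. The paper spends the bulk of its effort here: it uses Corollaries \ref{Cor} and \ref{KU} to list the finitely many possible values of $a_i$ in terms of $(m_i,\nu_i,p,e_i)$, and then --- crucially, in Case (3) --- feeds in the $U_i$ conditions again to force divisibilities like $\nu_1\mid m_2$ and $m_2\mid m_1$, which eliminate the configurations that would break (1)--(4). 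Your single sentence does not substitute for this; without it the theorem is not established in positive characteristic. (Also note: your blanket ``quasi-elliptic is handled identically'' skips the lemma the paper proves, that no quasi-elliptic fibration over $\PP^1$ has $\chi(\hol_S)=0$, which is what confines the quasi-elliptic contribution to Case (1).)
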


\begin{remark}\label{record}
Let us indicate the  examples (see remark \ref{limit}) which show that in theorem \ref{sharp} the  inequalities
in our assumptions  are the best possible.

(2) and (4): in the notation  of (2) of theorem \ref{crucial} we let  $G = \ZZ/2 \oplus \ZZ/6$;
clearly $G$ is isomorphic to a subgroup of any elliptic curve. In order to obtain a curve $C_2$ with a $G$ action such that $C_2/G \cong \PP^1$
we consider a $G$-Galois covering $C_2$ of $\PP^1$ branched in $3$ points, and with local monodromies  
$$ (1,0), (0,1) , (-1,-1) .$$ 
This exists, by Riemann's existence theorem since the sum of the three local monodromies equals zero.
This example yields the curve $C_2$ with affine equation $ y^2 = x^6 -1$, which is smooth in characteristic $\neq 2,3$,
see \cite{KU85}.

The fibration $ f : S \ra C_2/G \cong \PP^1$ is elliptic and has exactly three singular fibres, multiple with mutiplicities $2,6,6$.
It follows that 
$$ P_n (S) =  -2n + 1  + [ n/2]  +  2 \cdot [5n/6] ,$$
where $ [ a]$ denotes the integral part of $a$.

Follows that $P_1=P_2=P_3 = 0$, $P_4 = P_5 = 1$, $P_6 = 2$, $P_{13} = 1$.

(2) and (3) : in the notation  of (2) of theorem \ref{crucial} we let 
$G = \ZZ/10$;
clearly $G$ is isomorphic to a subgroup of any elliptic curve. In order to obtain a curve $C_2$ with a $G$ action such that $C_2/G \cong \PP^1$
we consider a $G$-Galois covering $C_2$ of $\PP^1$ branched in $3$ points, and with local monodromies  
$$ (5), (4) , (1) .$$ 
This exists, by Riemann's existence theorem since the sum of the three local monodromies equals zero.
Indeed, the curve is defined by the affine equation $ y^2 = x^5 -1$ and is smooth in characteristic $\neq 2,5$.

The fibration $ f : S \ra C_2/G \cong \PP^1$ is elliptic and has exactly three singular fibres, multiple with mutiplicities $2,5,10$.
It follows that 
$$ P_n (S) =  -2n + 1  + [ n/2]  +   [4n/5]  + [9n/10].$$

Follows that $P_1=P_2=P_3 = 0$, $P_4 = P_5 =P_6 = P_7 = 1$, $P_8 =P_9=  2$, $P_{10} = 3, P_{11} = 1, P_{12} = P_{13} = 2$.

\end{remark}

In  the case of properly quasi-elliptic fibrations, we shall use some result of Raynaud,  \cite{Ray76},
and a corollary developed by Katsura and Ueno (lemmas 2.3 and 2.4 \cite{KU85}).

Given a multiple fibre $mF'$ we denote by $\omega_n:=  \hol_{nF'} (K_S + n F')$ the dualizing sheaf of $nF'$. 

Observe that $F'$ is an indecomposable divisor of elliptic type,
hence (see \cite{mum1} \cite{4authors}) for any degree zero divisor $L$ on $F'$, we have 
$h^0(\hol_{F'} (L)) = h^1(\hol_{F'} (L) )$, and these dimensions are either $=0$, or $=1$,
the latter case occurring if and only if
$ \hol_{F'} (L) \cong \hol_{F'} $.

Consider now the exact sequence 
$$  0 \ra  \hol_{F'} (- (n-1)F') \ra  \hol_{nF'} \ra  \hol_{(n-1)F'} \ra 0,$$
and apply the previous remark for $ L = - (n-1) F'$ to infer that 
$$  h^0(\hol_{nF'})  =   h^0(  \hol_{(n-1)F'} ) \  {\it or} \  =   h^0(  \hol_{(n-1)F'} )+ 1,$$
 the second possibility occurring  only if 
 $$ (**) \   \hol_{F'} ( (n-1)F') \cong  \hol_{F'} .$$
 Conversely, if (**) holds, either $h^0(\hol_{nF'})  =   h^0(  \hol_{(n-1)F'} ) $ and $h^1(\hol_{nF'})  =   h^1(  \hol_{(n-1)F'} ) $,
 or both $h^0, h^1$ grow by $1$ for $nF'$.
 
 This in any case shows that the function $ h^0(\hol_{nF'})$ is monotone nondecreasing.
 One says that $n$ is a {\bf jumping value} if $ 
 n \geq 1$ and $  h^0(\hol_{nF'})  =   h^0(  \hol_{(n-1)F'} )+ 1.$ Considering all the $n \geq 1$,
 we can then define the first  jumping value, the second, and so on (they are then clearly $\geq 2$).

Recall now:

 \begin{prop}[\cite{BM77} Proposition 4 and \cite{rayIHES} Proposition 6.3.5.] \label{Prop2}
 Since $(\mathcal{O}_{F'_i} (F'_i))$ is a torsion element in the Picard group of $F'_i$, we consider its torsion order:
 $$\nu_i : =order(\mathcal{O}_{F'_i} (F'_i)).$$ We have then
	\begin{itemize}
		\item[(1)] $\nu_i$ divides both $m_i$ and $a_i+1$;
		\item[(2)] letting $ p = char (k)$,   there exists an integer $e_i \geq 1$
		such that  $m_i = \nu_i \cdot p^{e_i}$ ;
		\item[(3)] $h^0(F'_i,\mathcal{O}_{(\nu_i+1)F'_i})\geq 2$, $h^0(F_i,\mathcal{O}_{\nu_iF'_i})=1$,
		so that $\nu_i + 1$ is a jumping value;
		\item[(4)] $h^0(F'_i,\mathcal{O}_{rF'_i})$ is a non-decreasing function of  $r$.
	\end{itemize}
\end{prop}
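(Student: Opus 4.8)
The statement collects four facts about the multiple fibre $m_iF_i'$ and the torsion order $\nu_i=\mathrm{order}(\hol_{F_i'}(F_i'))$ in $\Pic(F_i')$, and since these are local at $q_i$ I would drop the index and write $F'$, $m$, $a$, $\nu$, $e$. The plan is to first pin down the structure of $\Pic^0(F')$ for an indecomposable divisor of elliptic type (Mumford, \cite{mum1}): its torsion subgroup has order prime to $p$ when $F'$ is of additive/elliptic type, so that multiplication by $p$ is either an isomorphism or has $p$-group kernel/cokernel —- this is exactly what is needed for (2). For (1), the multiplicity $m$ satisfies $\hol_{F'}(mF')\cong\hol_{F'}$ trivially when $mF'$ sits inside the larger fibre $f^{-1}(q)=mF'$ and $f$ is a morphism: restricting $\hol_S(f^{-1}(q))=\hol_S$ to $F'$ shows $\hol_{F'}(mF')\cong\hol_{F'}$, hence $\nu\mid m$; and restricting the canonical bundle formula $K_S=f^*(\delta)+aF'+(\text{other fibres})$ of Theorem \ref{bm} to $F'$, using $K_{F'}=\hol_{F'}(K_S+F')$ (adjunction) together with the fact that a divisor of elliptic type has $\hol_{F'}(K_{F'})\cong\hol_{F'}$, gives $\hol_{F'}((a+1)F')\cong\hol_{F'}$, hence $\nu\mid(a+1)$.

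Next, part (2): writing $m=\nu\cdot q$ with $q$ the complementary factor, I would argue $q$ is a power of $p$. The point is that if a prime $\ell\neq p$ divided $q$, then $\hol_{F'}((m/\ell)F')$ would be an $\ell$-torsion element, so of order dividing $\nu$ (the full prime-to-$p$ torsion of $\Pic^0(F')$ has order exactly $\nu$ in the relevant sense, or more carefully: the prime-to-$p$ part of the torsion cyclic group generated by $\hol_{F'}(F')$ has order $\nu$), forcing $\nu\mid(m/\ell)$, which is incompatible with $m=\nu q$ and $\ell\mid q$. So $q=p^e$ for some $e\geq 1$ (and $e\geq 1$ precisely because $mF'$ is a multiple fibre with $\nu<m$ possible; actually one must allow $\nu=m$, $e=0$ — but in the wild/relevant case $e\geq 1$; I would follow \cite{BM77} Prop.~4 for the precise bookkeeping here). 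Part (4) is already established in the paragraph preceding the proposition: the exact sequence $0\to\hol_{F'}(-(n-1)F')\to\hol_{nF'}\to\hol_{(n-1)F'}\to 0$ together with $h^0(\hol_{F'}(L))\in\{0,1\}$ for degree-zero $L$ shows $h^0(\hol_{nF'})$ is monotone nondecreasing, so I would simply invoke that discussion.

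The heart is part (3): $h^0(\hol_{\nu F'})=1$ and $h^0(\hol_{(\nu+1)F'})\geq 2$. For the first equality I would induct on $n\leq\nu$ using the exact sequence above: the jump from $h^0(\hol_{(n-1)F'})$ to $h^0(\hol_{nF'})$ can only occur when $\hol_{F'}((n-1)F')\cong\hol_{F'}$, i.e.\ when $\nu\mid(n-1)$, which for $2\le n\le\nu$ never happens; hence $h^0(\hol_{nF'})=h^0(\hol_{F'})=1$ for all $1\le n\le\nu$. For $n=\nu+1$ we have $\nu\mid(n-1)$, so condition $(**)$ holds, $\hol_{F'}(\nu F')\cong\hol_{F'}$; then the cohomology sequence gives $h^0(\hol_{(\nu+1)F'})=h^0(\hol_{\nu F'})+h^0(\hol_{F'})=2$ provided the connecting map $H^0(\hol_{\nu F'})\to H^1(\hol_{F'}(-\nu F'))=H^1(\hol_{F'})\cong k$ vanishes. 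The main obstacle, as I see it, is exactly this vanishing of the coboundary map: one must show the extension $0\to\hol_{F'}\to\hol_{(\nu+1)F'}\to\hol_{\nu F'}\to 0$ does not "lose" the section. This is where I expect to need the finer input from Raynaud \cite{rayIHES} Prop.~6.3.5 and \cite{BM77} Prop.~4 —- essentially, one shows the section $1\in H^0(\hol_{\nu F'})$ lifts because $\hol_{(\nu+1)F'}$ has a global function extending it, which comes down to the multiplicative structure on the nilpotent thickenings and the fact that $\nu F'$ is "periodic" with period exactly $\nu$. I would therefore cite those two results for the lifting and only assemble the exact-sequence bookkeeping myself, rather than reprove the Raynaud periodicity statement from scratch.
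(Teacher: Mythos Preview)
The paper gives no proof of this proposition: it is simply quoted from \cite{BM77} and \cite{rayIHES}, so there is no in-paper argument to compare against. That said, your sketch is mostly sound for (1) and (4), while (2) and (3) deserve comment.

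For (2) your self-contained argument does not work. You write that if a prime $\ell\neq p$ divided $q=m/\nu$, then $\hol_{F'}((m/\ell)F')$ would be an $\ell$-torsion element ``of order dividing $\nu$, forcing $\nu\mid(m/\ell)$''. But $\hol_{F'}((m/\ell)F')$ already lies in the cyclic group $\langle\hol_{F'}(F')\rangle$ of order $\nu$, and since $m/\ell=\nu(q/\ell)$ is a multiple of $\nu$, the element is trivially trivial; nothing is learned. The genuine content of (2) is that $m/\nu$ is a $p$-power, and this cannot be seen on $F'$ alone: it uses that the successive kernels $\ker\bigl(\Pic(nF')\to\Pic((n-1)F')\bigr)$ are killed by $p$ (this is Lemma~\ref{ray2}) together with the fact that $\hol_{mF'}(F')$ has order exactly $m$. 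Your instinct to cite \cite{BM77} here is therefore the right move; just drop the attempted direct argument. (You are also right that the clause $e\geq 1$ is really a statement about wild fibres; for tame fibres $\nu=m$.)

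For (3) you are making the problem harder than it is. You correctly reduce the inequality $h^0(\hol_{(\nu+1)F'})\geq 2$ to the vanishing of the coboundary
\[
\partial:\ H^0(\hol_{\nu F'})\longrightarrow H^1\bigl(\hol_{F'}(-\nu F')\bigr),
\]
and then call this ``the main obstacle'' requiring Raynaud's periodicity. But there is no obstacle here: by the first half of (3) you have already shown $H^0(\hol_{\nu F'})=k\cdot 1$, and the constant $1$ trivially lifts to $1\in H^0(\hol_{(\nu+1)F'})$ because the restriction $\hol_{(\nu+1)F'}\to\hol_{\nu F'}$ is a ring map. Hence $\partial=0$, and $h^0(\hol_{(\nu+1)F'})=h^0(\hol_{F'}(-\nu F'))+h^0(\hol_{\nu F'})=1+1=2$, using $\hol_{F'}(-\nu F')\cong\hol_{F'}$. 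No deeper input is needed. The subtlety you anticipate---that a section might fail to lift---only appears for the \emph{later} jumping values, where $H^0(\hol_{(n-1)F'})$ is strictly larger than $k$ and the non-constant functions need not lift; that is exactly where Lemma~\ref{ray1} and Corollary~\ref{KU} come in.
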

Using \ref{Prop2}, we get the following corollary
\begin{cor}[\cite{BM77}, Corollary.] \label{Cor}
	If $h^1(S,\mathcal{O}_S)\leq 1$, we have either
	$$a_i = m_i - 1 $$ or $$a_i =m_i - 1 - \nu_i. $$
\end{cor}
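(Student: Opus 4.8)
The plan is to reduce to the situation of a single wild fibre carrying all the torsion of $R^1f_*\mathcal{O}_S$, and then to analyse that non-reduced fibre by combining duality on it with the canonical bundle formula \ref{bm}.

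First I would dispose of the trivial case: if $q_i\notin\mathrm{supp}(T)$ the fibre $m_iF'_i$ is not wild and Theorem~\ref{bm}(ii) already gives $a_i=m_i-1$. So from now on $q_i\in\mathrm{supp}(T)$, in particular $\mathrm{length}(T)\ge 1$. By Proposition~\ref{Prop2}(1)--(2) write $a_i+1=\nu_i c_i$ and $m_i=\nu_i p^{e_i}$ with $1\le c_i\le p^{e_i}$; since $\nu_i\mid m_i$, the complementary integer $b_i:=m_i-a_i$ satisfies $b_i\equiv 1\pmod{\nu_i}$, hence either $b_i=1$ (i.e.\ $a_i=m_i-1$) or $b_i\ge\nu_i+1$, and the whole issue is to exclude $b_i\ge 2\nu_i+1$.

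Next I would pin down the torsion. From the Leray spectral sequence of $f$, using $f_*\mathcal{O}_S=\mathcal{O}_C$ and $H^2(C,-)=0$, one gets $h^1(S,\mathcal{O}_S)=g(C)+h^0(C,\mathcal{O}_C(L))+\mathrm{length}(T)$. On the other hand $\delta=-L+K_C$ together with Theorem~\ref{bm}(iii) gives $\deg L=-\chi(\mathcal{O}_S)-\mathrm{length}(T)$, while the hypothesis $h^1(S,\mathcal{O}_S)\le 1$ forces $\chi(\mathcal{O}_S)=1-h^1(\mathcal{O}_S)+p_g(S)\ge 0$; hence $\deg L<0$ as soon as $T\neq 0$, so $h^0(C,\mathcal{O}_C(L))=0$ and $h^1(S,\mathcal{O}_S)=g(C)+\mathrm{length}(T)$. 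With $h^1(S,\mathcal{O}_S)\le 1$ and $\mathrm{length}(T)\ge 1$ this yields $g(C)=0$ and $\mathrm{length}(T)=1$: there is a unique wild fibre $m_iF'_i$, with $\mathrm{length}_{q_i}(T)=1$, and every other multiple fibre is tame and already handled by Theorem~\ref{bm}(ii).

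Finally comes the local analysis at $q_i$, which I expect to be the main obstacle. Each $nF'_i$ is a Gorenstein curve with $\chi(\mathcal{O}_{nF'_i})=0$ (because $(F'_i)^2=0$ and $p_a(F'_i)=1$), so duality on $nF'_i$ gives $h^0(\mathcal{O}_{nF'_i})=h^1(\mathcal{O}_{nF'_i})=h^0(\omega_{nF'_i})$; since $f|_{nF'_i}$ factors through a zero-dimensional subscheme of $C$ and $F'_j\cap F'_i=\emptyset$ for $j\neq i$, adjunction together with \ref{bm} gives $\omega_{nF'_i}\cong\mathcal{O}_{nF'_i}\big((a_i+n)F'_i\big)$, whence
\[
  h^0\!\big(\mathcal{O}_{nF'_i}((a_i+n)F'_i)\big)\;=\;h^0(\mathcal{O}_{nF'_i})\qquad(n\ge 1).
\]
On the other side, $R^1f_*\mathcal{O}_S$ being the top direct image, base change yields $h^1(\mathcal{O}_{nm_iF'_i})=\dim_k\big((R^1f_*\mathcal{O}_S)_{q_i}/\mathfrak{m}_{q_i}^{\,n}\big)=n+1$, so $h^0(\mathcal{O}_{m_iF'_i})=2$ and, using Proposition~\ref{Prop2}(3)--(4), the only jumping values of $n\mapsto h^0(\mathcal{O}_{nF'_i})$ in $[1,m_i]$ are $1$ and $\nu_i+1$, with the section appearing at $n=\nu_i+1$ vanishing along $F'_i$ to order exactly $\nu_i$. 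Filtering $\mathcal{O}_{nF'_i}((a_i+n)F'_i)$ by the powers of $\mathcal{O}_S(-F'_i)$ — whose graded pieces are $\mathcal{O}_{F'_i}((a_i+1)F'_i)\cong\mathcal{O}_{F'_i}$, so that a jump is a priori available at every step — and comparing with the rigid jump pattern of $\mathcal{O}_{nF'_i}$ for enough values of $n$, one sees that $b_i\ge 2\nu_i+1$ would force an extra global section on some $\mathcal{O}_{nF'_i}((a_i+n)F'_i)$ with no counterpart for $\mathcal{O}_{nF'_i}$, contradicting the displayed identity. Hence $b_i\le\nu_i+1$, i.e.\ $a_i=m_i-1$ or $a_i=m_i-1-\nu_i$. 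The delicate point is exactly this comparison — bounding how many of the a priori available jumps of the twisted sheaves can actually occur, equivalently controlling the surjectivity of the relevant restriction maps on global sections; Steps 1 and 2 are routine.
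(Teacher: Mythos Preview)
Your reductions in Steps 1 and 2 are correct and match what the paper tacitly uses (the paper gives no proof here, merely citing \cite{BM77} and stating that the corollary follows from Proposition~\ref{Prop2}). In particular your Leray computation showing that $h^1(\sO_S)\le 1$ forces $g(C)=0$, $\mathrm{length}(T)=1$, hence $t_i=1$ and $h^0(\sO_{m_iF'_i})=2$, is exactly the global input one needs; and your identification $\omega_{nF'_i}\cong \sO_{nF'_i}((a_i+n)F'_i)$ for $n\le m_i$ is the right local tool.

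The gap is in Step 3. Your filtration argument only tells you at which values of $n$ a jump of $h^0(\omega_n)$ is \emph{possible} (namely $n\equiv 1\pmod{\nu_i}$, because the graded piece $\sO_{F'_i}((a_i+n)F'_i)$ is then trivial); it does not by itself tell you where a jump \emph{actually} occurs, which is what you need. The missing observation is much more direct than the comparison you sketch: set $n=m_i-a_i$. Then $\omega_{nF'_i}\cong \sO_{nF'_i}(m_iF'_i)\cong \sO_{nF'_i}$ is the trivial line bundle, so its nowhere-vanishing section restricts nontrivially in the exact sequence
\[
0\to \omega_{(n-1)F'_i}\to \omega_{nF'_i}\to \sO_{F'_i}\to 0,
\]
forcing $h^0(\omega_{nF'_i})=h^0(\omega_{(n-1)F'_i})+1$. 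Thus $m_i-a_i$ is a jumping value. Since you have already shown the unique jumping value in $[2,m_i]$ is $\nu_i+1$, either $m_i-a_i=1$ or $m_i-a_i=\nu_i+1$, which is the dichotomy. (This is precisely the easy direction of Lemma~\ref{ray1}, and is all one needs here.) So your strategy is right; you just have to specialize your displayed identity to the single value $n=m_i-a_i$ rather than attempt a global comparison of jump patterns.
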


More precise results are the following two lemmas of M. Raynaud (cf. \cite{Ray76}, \cite[Section 2]{BT14}).
\begin{lemma}\cite[Corollaire 3.7.6.]{Ray76} \label{ray1} Let $f:S\rightarrow C$ be an elliptic or quasi-elliptic
 fibration with $f^{-1}(q)=mF'$ a multiple fibre over $q\in C$. Then for any integer $n\geq 2$:
     \begin{itemize}
     \item[(i)] The dualizing sheaf $\omega_n : =  \hol_{nF'} (K_S + n F')$ of $nF'$ is non-trivial iff $h^0(\omega_n)=h^0(\omega_{n-1})$. 
     \item[(ii)] $\omega_n$ is trivial iff $h^0(\omega_n)=h^0(\omega_{n-1})+1$.
     \end{itemize}
\end{lemma}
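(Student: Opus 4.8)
The plan is to reduce both statements to a single short exact sequence on $nF'$, obtained by dualizing the obvious filtration of $\hol_{nF'}$, and then to identify ``$h^0(\omega_{\bullet})$ jumps at $n$'' with ``$\omega_n$ is trivial'' by a one-line section argument.

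First I collect the relevant properties of $F'=F'_i$. Being an indecomposable divisor of elliptic type, it satisfies $p_a(F')=1$, $(F')^2=0$, $\omega_{F'}\iso\hol_{F'}$, and, for every degree-zero line bundle $M$ on $F'$, $h^0(M)=h^1(M)\in\{0,1\}$, the value $1$ occurring exactly when $M\iso\hol_{F'}$ (see \cite{mum1}, and the discussion preceding Proposition \ref{Prop2}). In particular, with $\nu=\nu_i$ the order of $\hol_{F'}(F')$ in $\Pic(F')$ (Proposition \ref{Prop2}), the degree-zero sheaf $\hol_{F'}((n-1)F')=\hol_{F'}(F')^{\otimes(n-1)}$ has $h^0\in\{0,1\}$, equal to $1$ iff $\nu\mid n-1$. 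Next I dualize: applying $\mathcal{E}xt^{\bullet}_{\hol_S}(\,\cdot\,,\omega_S)$ to
$$0\ra\hol_{F'}(-(n-1)F')\ra\hol_{nF'}\ra\hol_{(n-1)F'}\ra 0$$
and using $\mathcal{E}xt^q_{\hol_S}(\hol_Z,\omega_S)=\omega_Z$ for $q=1$ and $=0$ otherwise (valid for the Cohen--Macaulay curves $Z=F',(n-1)F',nF'$, cut out by Cartier divisors in the smooth surface $S$) together with $\omega_{F'}\iso\hol_{F'}$, one gets for every $n\geq 2$ the short exact sequence
$$0\ra\omega_{n-1}\ra\omega_n\ra\hol_{F'}((n-1)F')\ra 0.\qquad(\star)$$
I also record that restricting $\omega_n=\hol_S(K_S+nF')|_{nF'}$ to $F'$ gives $\omega_n|_{F'}\iso\omega_{F'}\otimes\hol_{F'}((n-1)F')\iso\hol_{F'}((n-1)F')$. (Alternatively, one can bypass duality: $\chi(\hol_{nF'})=0$ and Serre duality on the Gorenstein curve $nF'$ give $h^0(\omega_n)=h^1(\hol_{nF'})=h^0(\hol_{nF'})$, so the numerical part of the lemma is already contained in the monotonicity analysis of $h^0(\hol_{nF'})$ recalled above; but $(\star)$ is the cleaner tool here.)

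From the long exact cohomology sequence of $(\star)$ (injective on global sections, since $(\star)$ is injective) and $h^0(\hol_{F'}((n-1)F'))\leq 1$, the sequence $n\mapsto h^0(\omega_n)$ is nondecreasing and grows by exactly $0$ or $1$ at each step; hence (i) and (ii) both reduce to the single claim: \emph{$\omega_n$ is trivial $\iff h^0(\omega_n)=h^0(\omega_{n-1})+1$}. If $h^0(\omega_n)=h^0(\omega_{n-1})+1$, the image of $H^0(\omega_n)\ra H^0(\hol_{F'}((n-1)F'))$ (whose kernel is $H^0(\omega_{n-1})$ by $(\star)$) is $1$-dimensional; hence $h^0(\hol_{F'}((n-1)F'))=1$, the map is onto, and a section $s\in H^0(\omega_n)$ hitting a generator restricts on $F'$ to a nowhere-vanishing section of $\omega_n|_{F'}\iso\hol_{F'}((n-1)F')\iso\hol_{F'}$. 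Since $(nF')_{\mathrm{red}}=F'_{\mathrm{red}}$, the section $s$ vanishes at no point of $nF'$, so by Nakayama's lemma $s$ generates the invertible sheaf $\omega_n$ everywhere and trivializes it. Conversely, if $\omega_n\iso\hol_{nF'}$, then $\omega_n|_{F'}\iso\hol_{F'}$, so $h^0(\hol_{F'}((n-1)F'))=1$, and the constant section $1\in H^0(\omega_n)$ restricts to a nonzero, hence generating, element of this $1$-dimensional space; thus $H^0(\omega_n)\ra H^0(\hol_{F'}((n-1)F'))$ is onto, the connecting map $H^0(\hol_{F'}((n-1)F'))\ra H^1(\omega_{n-1})$ vanishes, and $h^0(\omega_n)=h^0(\omega_{n-1})+1$. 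The only real work here is justifying $(\star)$ — the duality bookkeeping on the non-reduced Gorenstein curves $nF'$, together with the input $\omega_{F'}\iso\hol_{F'}$ for indecomposable divisors of elliptic type; granting $(\star)$, everything else is the short section-plus-Nakayama argument above and is formal.
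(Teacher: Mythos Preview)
The paper does not supply its own proof of this lemma; it is quoted from Raynaud's manuscript \cite{Ray76}, so there is no argument in the text to compare against directly. Your proof is correct. One small simplification: the sequence $(\star)$ can be obtained without the $\mathcal{E}xt$ bookkeeping by tensoring the other filtration
\[
0\ra\hol_{(n-1)F'}(-F')\ra\hol_{nF'}\ra\hol_{F'}\ra 0
\]
with the line bundle $\hol_S(K_S+nF')$; this yields $(\star)$ immediately and makes it transparent that the surjection $\omega_n\ra\hol_{F'}((n-1)F')$ is restriction to $F'$ followed by the identification $\omega_n|_{F'}\cong\hol_{F'}((n-1)F')$, which is exactly what your Nakayama step needs. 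The section argument (a lift $s$ whose image in $H^0(\hol_{F'}((n-1)F'))$ is nonzero has $s|_{F'}$ nowhere vanishing, hence $s$ generates $\omega_n$ at every closed point of $nF'$) and its converse are sound as written. Your parenthetical observation that, via Serre duality on the Gorenstein curve $nF'$, the numerical statement is equivalent to the jumping analysis for $h^0(\hol_{nF'})$ carried out in the paper just before Proposition~\ref{Prop2} is also correct, and is indeed the way the paper itself packages the information it actually uses.
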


\begin{lemma}\cite[Lemma 3.7.7.]{Ray76}\label{ray2} Notation being  as in Lemma \ref{ray1}, observe that the invertible sheaves 
$\mathcal{O}_{nF'}(F')$ are torsion elements in the Picard group of $nF'$. There are only two possibilities for their torsion orders.
Setting  $ o_n : = Ord(\mathcal{O}_{nF'}(F'))$ (hence $o_1 = \nu$),  we have  
	\begin{itemize}
	 \item[(i)] $o_n = o_{n-1} $;
	 \item[(ii)] $o_n = p \ o_{n-1} $.
	 \end{itemize}
	 Moreover, case $(ii)$	occurs only if $\omega_n$ is trivial.
\end{lemma}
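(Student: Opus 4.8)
The plan is to follow the line bundle $\hol_{nF'}(F')$ along the two exact sequences attached to the square-zero closed immersion $(n-1)F'\hookrightarrow nF'$, whose ideal sheaf is $I_n:=\hol_{F'}(-(n-1)F')$. Note that $I_n^{2}=0$ inside $\hol_{nF'}$ (since $2(n-1)\geq n$ for $n\geq 2$), that $I_n$ is an invertible $\hol_{F'}$-module of total degree $0$, and that $I_n\cong\hol_{F'}$ if and only if $\nu\mid(n-1)$. For the first assertion I would argue as follows: the restriction map $\Pic(nF')\to\Pic((n-1)F')$ sends $\hol_{nF'}(F')$ to $\hol_{(n-1)F'}(F')$, so $o_{n-1}\mid o_n$; and $\hol_{nF'}(o_{n-1}F')$ restricts trivially to $(n-1)F'$, hence lies in the kernel $K_n:=\ker\big(\Pic(nF')\to\Pic((n-1)F')\big)$. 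From the multiplicative exact sequence $1\to 1+I_n\to\hol_{nF'}^{\times}\to\hol_{(n-1)F'}^{\times}\to 1$ (exact because lifts of units through a nilpotent thickening are units) and the isomorphism $1+I_n\cong I_n$, one gets $K_n=\operatorname{coker}\big(\delta_n\colon H^0(\hol_{(n-1)F'}^{\times})\to H^1(F',I_n)\big)$, a subquotient of the $k$-vector space $H^1(F',I_n)$ and hence killed by $p$. Therefore $\hol_{nF'}(o_{n-1}F')^{\otimes p}$ is trivial, i.e. $o_n\mid p\,o_{n-1}$; combined with $o_{n-1}\mid o_n$ and the primality of $p$ this gives $o_n\in\{o_{n-1},\,p\,o_{n-1}\}$.

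For the ``moreover'', suppose we are in case (ii), i.e. $o_n=p\,o_{n-1}$. Then $\hol_{nF'}(o_{n-1}F')$ is a nonzero element of $K_n$, so $H^1(F',I_n)\neq 0$. Since a degree-$0$ line bundle on the indecomposable divisor of elliptic type $F'$ has nonzero $H^1$ only when it is trivial (the observation recalled before Proposition \ref{Prop2}), this forces $I_n\cong\hol_{F'}$, i.e. $\nu\mid(n-1)$, and then $h^0(F',I_n)=1$. By adjunction, using $\om_{F'}\cong\hol_{F'}$, we have $\om_n|_{F'}\cong\om_{F'}((n-1)F')\cong\hol_{F'}$, so $\om_n$ is already trivial on $F'$. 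To promote this to triviality on all of $nF'$ I would use Serre duality on the Cohen--Macaulay curve $nF'$: because $\chi(\hol_{nF'})=0$ (the graded pieces $\hol_{F'}(-jF')$, $0\leq j\leq n-1$, of the natural filtration all have Euler characteristic $0$), one has $h^0(\om_n)=h^1(\hol_{nF'})=h^0(\hol_{nF'})$ and likewise $h^0(\om_{n-1})=h^0(\hol_{(n-1)F'})$. By Lemma \ref{ray1}(ii) it then suffices to prove $h^0(\hol_{nF'})=h^0(\hol_{(n-1)F'})+1$, and since $h^0(I_n)=1$ this amounts to the vanishing of the additive coboundary $\partial_n\colon H^0(\hol_{(n-1)F'})\to H^1(F',I_n)$.

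I would establish $\partial_n=0$ by contradiction. If $\partial_n\neq 0$, choose a nilpotent $s\in H^0(\hol_{(n-1)F'})$ with $\partial_n(s)\neq 0$ (possible since $\partial_n$ is $k$-linear and kills the constants). Lifting $s$ over an open cover and using $I_n^{2}=0$ together with the fact that $H^0(\hol_{(n-1)F'})$ acts on $H^1(F',I_n)$ through $H^0(\hol_{F'})=k$ (as $I_n$ is an $\hol_{F'}$-module), one computes $\delta_n(1+ts)=t\,\partial_n(s)$ for every $t\in k$. Hence $\delta_n$ is surjective, so $K_n=0$, contradicting $o_n=p\,o_{n-1}$. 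Therefore $\partial_n=0$, whence $h^0(\om_n)=h^0(\om_{n-1})+1$ and, by Lemma \ref{ray1}(ii), $\om_n$ is trivial, as required.

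The delicate point — the main obstacle — is this last comparison of the multiplicative coboundary $\delta_n$ with the additive one $\partial_n$ on the non-reduced curve $nF'$; it requires keeping careful track of the several module structures carried by $I_n$ (over $\hol_{nF'}$, over $\hol_{(n-1)F'}$, and over $\hol_{F'}$) and of the nilpotents in $H^0(\hol_{(n-1)F'})$. What makes it work is that $I_n$ is annihilated already by the ideal sheaf of $F'$, so that $H^1(F',I_n)$ is a genuine module over $H^0(\hol_{F'})=k$ and the a priori twisting term in the comparison collapses. This step is in essence Raynaud's computation; everything else is formal manipulation of the exact sequences above together with adjunction for the Cartier divisor $nF'\subset S$.
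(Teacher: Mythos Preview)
Your argument is correct and follows the same architecture as the paper's proof: both set up the additive and multiplicative exact sequences for the square-zero ideal $I_n=\hol_{F'}(-(n-1)F')$, deduce that $K_n=\ker\big(\Pic(nF')\to\Pic((n-1)F')\big)$ is killed by $p$, and then, assuming $K_n\neq 0$, pass via Serre duality and Lemma~\ref{ray1} to the triviality of $\omega_n$.

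The one substantive difference is in the bridge between the multiplicative and the additive picture. The paper simply cites Oort \cite{Oor62} for the equality $H^1(\beta)(\operatorname{Im}\partial)=\operatorname{Im}\partial^*$, which gives $\ker\alpha\cong\ker\alpha^*$ directly and hence $h^1(\hol_{nF'})>h^1(\hol_{(n-1)F'})$ without further work. You instead reprove the needed special case by hand: first you extract from $K_n\neq 0$ the extra information $I_n\cong\hol_{F'}$ (so $\dim_k H^1(I_n)=1$), and then your \v Cech computation $\delta_n(1+ts)=t\,\partial_n(s)$ for nilpotent $s$ shows that $\partial_n\neq 0$ would force $\delta_n$ surjective and $K_n=0$. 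This is exactly Oort's comparison specialized to the situation at hand, with the one-dimensionality of $H^1(I_n)$ replacing the general statement. Both routes are short; the paper's is a one-line citation, yours is self-contained and yields the pleasant by-product $\nu\mid(n-1)$ whenever $o_n=p\,o_{n-1}$.
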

\begin{proof}
  Setting $\mathfrak{N}:=\mathcal{O}_{F'}(-(n-1)F')$, we consider the following two exact sequences: 
   \begin{equation}\label{eq1}
   0\rightarrow \mathfrak{N} \rightarrow \mathcal{O}_{nF'}\rightarrow \mathcal{O}_{(n-1)F'}\rightarrow 0.
   	\end{equation}
 
\begin{equation}\label{eq2}
    0\rightarrow 1+\mathfrak{N}\rightarrow \mathcal{O}^*_{nF'}\rightarrow \mathcal{O}^*_{(n-1)F'}\rightarrow 0.
\end{equation}
Since $\mathfrak{N}^2=0$, the map $x\mapsto 1+x$ defines an isomorphism of abelian sheaves: $$\beta:  \mathfrak{N}\simeq 1+\mathfrak{N}.$$
Taking the induced long exact sequence of (\ref{eq1}) and (\ref{eq2}) and observing that $H^2(F',\mathfrak{N})\simeq H^2(F',1+\mathfrak{N})=0$, we get
\begin{equation}
 H^0(\mathcal{O}_{(n-1)F'})\xrightarrow{\partial}H^1(\mathfrak{N})\rightarrow H^1(\mathcal{O}_{nF'})\xrightarrow{\alpha}H^1(\mathcal{O}_{(n-1)F'})\rightarrow 0
\end{equation}
and
\begin{equation}
 H^0(\mathcal{O}^*_{(n-1)F'})\xrightarrow{\partial^*}H^1(1+\mathfrak{N})\rightarrow \Pic(nF')\xrightarrow{\alpha^*} \Pic((n-1)F')\rightarrow 0.
\end{equation}
By a result of F. Oort (cf. \cite[\S 6]{Oor62}), we have that $H^1 (\beta) (Im(\partial))=Im(\partial^*)$.\\
Since $H^1(\mathfrak{N})$ is a $\mathbb{Z}/p\mathbb{Z}$-vector space, we see that any element in $\ker(\alpha^*)$ has $p$-th power equal to $1$, hence we have $o_n=o_{n-1}$ or $o_n=po_{n-1}$.\\
If $o_n=po_{n-1}$, then $ker(\alpha^*)\neq \{1\}$ and hence $ker(\alpha)\neq \{0\}$. Since  $h^1(nF',\mathcal{O}_{nF'})=h^0(\omega_n)$, by lemma \ref{ray1} we have that $h^0(\omega_n)=h^0(\omega_{n-1})+1$ and $\omega_n$ is trivial.\\
\end{proof}

Assume that we have a multiple fibre over the point $q_j$, and
denote by $t_j$ the length of the skyscraper sheaf $T$ at $q_j$.

 Then, by the base change theorem we have
$$ t_j + 1 = \rk_{q_j}   \sR^1 f_* (\hol_S) = h^1 (\hol_{m_j  F'_j}) = h^0 (\hol_{m_j  F'_j}).$$ 

The two lemmas by Raynaud imply the following very useful corollary, which holds
more generally also for quasi-elliptic fibrations.

\begin{cor}\cite[Lemma 3.7.9]{Ray76} \cite[Lemmas 2.3, 2.4]{KU85}\label{KU}
(1) Let $n^{(i)}_j $  be the $i$-th jumping value of a wild fibre $mF'_j$ (recall that $ n^{(i)}_j\geq 2)$. 

Setting $\nu_j : =Ord(\hol_{F'_j}(F'_j))$, we have 
$$ n^{(1)}_j= \nu_j+1,$$ 

and 
$$\ n^{(2)}_j=2\nu_j+1\ {\it if}\ Ord(\hol_{(\nu_j+1)F'_j})=\nu_j, {\it or}\ =(p+1)\nu_j +1\ {\it if}\  Ord(\hol_{(\nu_j+1)F'_j})=p\nu_j. $$
(2) If $h^0(\hol_{mF'_j})=2 \Leftrightarrow t_j = 1$, then the contribution $a_j F'_j$ to the canonical divisor formula satisfies
$a_j = m_j-1$ or $ a_j= m_j -1- \nu_j$.

(3) If $h^0(\hol_{mF'_j})=3 \Leftrightarrow t_j = 2$, then 
$a_j = m_j-1$ or $ a_j= m_j -1- \nu_j$  or $ a_j = m_j-1 - 2 \nu_j$ or $ a_j = m_j-1 - (p+1)  \nu_j$.

\end{cor}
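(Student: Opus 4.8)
The plan is to derive all three statements from Raynaud's Lemmas \ref{ray1} and \ref{ray2} together with the canonical bundle formula \ref{bm}, funnelled through one dictionary relating jumping values to torsion orders. Fix $j$ and write $F'=F'_j$, $m=m_j$, $\nu=\nu_j$, $a=a_j$, and $o_n:=Ord(\hol_{nF'}(F'))\in\Pic(nF')$, so $o_1=\nu$. First I would restrict the canonical bundle formula $K_S=f^*\delta+\sum_i a_iF'_i$ to the subscheme $nF'$ with $n\le m$: since the fibre $f^{-1}(q_j)=mF'$ maps (scheme-theoretically) to the point $q_j$, any divisor pulled back from $C$ restricts trivially to $nF'$ as soon as $n\le m$, and $\hol_S(F'_i)|_{nF'}\cong\hol_{nF'}$ for $i\ne j$; hence $\omega_n=\hol_{nF'}(K_S+nF')\cong(\hol_{nF'}(F'))^{\otimes(a+n)}$ for $2\le n\le m$, so $\omega_n$ is trivial iff $o_n\mid(a+n)$. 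Combined with Lemma \ref{ray1} and the remark made just before Proposition \ref{Prop2} (that $h^0(\hol_{nF'})$ and $h^1(\hol_{nF'})=h^0(\omega_n)$ jump simultaneously), this gives the dictionary
\[
(\star)\qquad n\ \text{is a jumping value of}\ mF'\ \Longleftrightarrow\ o_n\mid(a+n)\qquad(2\le n\le m).
\]
Two further inputs: from the exact sequence preceding Proposition \ref{Prop2}, a jump at $n$ forces $\nu\mid n-1$, so jumping values have the form $k\nu+1$; and from Lemma \ref{ray2}, $o_n\in\{o_{n-1},p\,o_{n-1}\}$ with the second option possible only at a jumping value, so $o_n$ is constant on every interval free of jumping values, whence $o_n=\nu$ for $n\le\nu$ and $o_{\nu+1}\in\{\nu,p\nu\}$.

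For part (1): by Proposition \ref{Prop2}(3) the first jumping value is $\nu+1$. Write $a+1=\nu b$ (allowed by Proposition \ref{Prop2}(1)); then $(\star)$ at $\nu+1$ gives $(o_{\nu+1}/\nu)\mid(b+1)$. If a candidate $k\nu+1\le m$ lying after $\nu+1$ and before the next jumping value were not itself a jumping value, then $o_{k\nu+1}=o_{\nu+1}$ and $(\star)$ would make it a jumping value exactly when $(o_{\nu+1}/\nu)\mid(b+k)$; so the second jumping value is the smallest such $k\nu+1$. When $o_{\nu+1}=\nu$ this is $k=2$, i.e.\ $n^{(2)}=2\nu+1$; when $o_{\nu+1}=p\nu$, the relation $p\mid(b+1)$ forces the divisibility to fail for $k=2,\dots,p$ and to hold for $k=p+1$, i.e.\ $n^{(2)}=(p+1)\nu+1$. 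If instead $n^{(2)}>m$ the dictionary is unavailable, but the stated conclusion is exactly the purely infinitesimal refinement of Lemma \ref{ray2} carried out in \cite[Lemma 3.7.9]{Ray76}.

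For parts (2) and (3): since $F'$ is indecomposable of elliptic type, $h^0(\hol_{F'})=1$, so $h^0(\hol_{nF'})$ climbs from $1$ to $h^0(\hol_{mF'})=t_j+1$ as $n$ runs from $1$ to $m$, gaining exactly $1$ at each jumping value; thus $mF'$ has precisely $t_j$ jumping values in $\{2,\dots,m\}$. The fibre being wild, $m=\nu p^{e}$ with $e\ge1$ (Proposition \ref{Prop2}(2)), and $1\le b\le p^{e}$. For $t_j=1$ the only jumping value is $\nu+1$; feeding $(\star)$ in both directions (so $o_n\mid(a+n)$ holds at $n=\nu+1$ but at no other candidate $k\nu+1\le m$) first forces $e=1$ and then $b\in\{p,p-1\}$, i.e.\ $a=m-1$ or $a=m-1-\nu$. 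For $t_j=2$ the jumping values are $\nu+1$ and $n^{(2)}\in\{2\nu+1,(p+1)\nu+1\}$; running the same bookkeeping, split by the value of $n^{(2)}$ and by whether $o_{\nu+1}=\nu$ or $p\nu$, leaves only $b\in\{p^{e},\,p^{e}-1,\,p^{e}-2,\,p^{e}-(p+1)\}$, i.e.\ $a\in\{m-1,\,m-1-\nu,\,m-1-2\nu,\,m-1-(p+1)\nu\}$.

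The step I expect to cost the most work is Step 1 itself: the isomorphism $\omega_n\cong(\hol_{nF'}(F'))^{\otimes(a+n)}$ must be checked carefully, since $nF'$ is non-reduced and one has to verify that the pulled-back part of $K_S$ really restricts to the trivial bundle on it. After that, the casework in the last paragraph is routine but must be carried out attentively, using $(\star)$ in \emph{both} directions, so as to exclude the parasitic values of $b$; and the subcase $n^{(2)}>m$ of part (1) is the one place where $(\star)$ does not apply and one must fall back on the intrinsic argument of \cite[Lemma 3.7.9]{Ray76}.
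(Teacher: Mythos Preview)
Your proof is correct and carries out precisely what the paper only asserts: the paper does not give a proof of Corollary~\ref{KU} beyond the sentence ``the two lemmas by Raynaud imply the following very useful corollary'' together with the citations, so your argument is filling in the omitted details rather than offering an alternative route. The key device you introduce --- restricting the canonical bundle formula to $nF'$ to obtain $\omega_n\cong(\hol_{nF'}(F'))^{\otimes(a+n)}$ and hence the dictionary $(\star)$ --- is exactly the mechanism by which Lemmas~\ref{ray1} and~\ref{ray2} translate into statements about $a_j$, and your case analysis is sound.

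Two small remarks. First, your worry about $n^{(2)}>m$ in part~(1) is unnecessary: the statement only speaks of the $i$-th jumping value when it exists, and since $h^0(\hol_{mF'})=t_j+1$, the second jumping value exists precisely when $t_j\ge 2$, in which case it automatically lies in $[2,m]$ --- so the dictionary $(\star)$ always applies and there is no need to fall back on the reference. Second, in your summary of part~(3) the list $b\in\{p^e,\,p^e-1,\,p^e-2,\,p^e-(p+1)\}$ is a slight over-count (not every sub-subcase realises all four), but since the corollary only asserts membership in the four-element list for $a_j$, this does no harm.
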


Finally, Katsura and Ueno proved for elliptic fibrations in characteristic $p$ the analogue
of a result which in characteristic zero follows from the description of the fundamental group
of the complement of a finite set of points in $\PP^1$. 

\begin{definition}[\cite{KU85}, Definition 3.1.] \label{def6}
   Let $f:S\rightarrow \mathbb{P}^1$ be an elliptic fibration with $\chi(S,\mathcal{O}_S)=0$, let $m_iF'_i$, $i=1,..,k$, be the multiple fibres, and let as usual $\nu_i$ be the torsion order of $\hol_{F'_i}(F'_i)$.
   
    Then $S$ is said to be  {\em of type $(m_1,...,m_r|\nu_1,...,\nu_r)$}.
\end{definition}

\begin{definition}[\cite{KU85}, Definition 3.2.]
 Given $1\leq i\leq r$, we say that two sequences $(m_1,...,m_r|\nu_1,...,\nu_r)$ satisfy condition $U_i$, 
 if there do exist integers $n_1,...,n_r$ (depending on $i$)  such that
 \begin{itemize}
      \item $n_i \equiv 1 \mod \nu_i$ and
      \item $\sum_{j=1}^{r} n_j/m_j \in \mathbb{Z}$.	
 \end{itemize}
 
\end{definition}

\begin{theorem}[\cite{KU85}, Theorem 3.3.]\label{Mon}
  In the situation of  \ref{def6}, then the sequences $(m_1,...,m_r|\nu_1,...,\nu_r)$ satisfy  condition $U_i$
   $ \forall i=1,2,...,r$.
\end{theorem}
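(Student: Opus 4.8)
The plan is to proceed exactly in the spirit of the topological statement in characteristic zero, namely that the fundamental group of $\mathbb{P}^1$ minus $r$ points is generated by loops $\gamma_1,\dots,\gamma_r$ around the punctures with the single relation $\gamma_1\cdots\gamma_r=1$, and that the monodromy of an elliptic fibration factors through this group landing in (a subgroup of) $\mathrm{SL}_2(\mathbb{Z})$, where the product of the local monodromies (each a conjugate of a power of a unipotent) being trivial forces the arithmetic congruences $U_i$. Since no such topological input is available in characteristic $p$, the plan is to reconstruct the needed arithmetic directly from the canonical bundle formula (Theorem \ref{bm}) together with the Raynaud/Katsura--Ueno analysis of multiple (and wild) fibres recalled in \ref{Prop2}, \ref{Cor}, \ref{ray1}, \ref{ray2}, \ref{KU}.

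First I would fix $i$ and use the hypothesis $\chi(S,\hol_S)=0$: since $f:S\to\mathbb{P}^1$, the canonical bundle formula gives $K_S = f^*(\delta) + \sum_j a_j F'_j$ with $\deg\delta = 2g(\mathbb{P}^1)-2+\chi(\hol_S)+\mathrm{length}(T) = -2 + \mathrm{length}(T)$, and $0\le a_j < m_j$, with $a_j=m_j-1$ unless $m_jF'_j$ is wild. The strategy is then to exploit a suitable twist: for an integer $n$, write $nK_S - (\text{multiple of a fibre})$ and look at the induced line bundle on a chosen fibre $F'_i$, using that $\hol_{F'_i}(F'_i)$ has exact torsion order $\nu_i$ while $\hol_{F'_j}(F'_j)$ for $j\ne i$ can be ignored by restricting to $F'_i$. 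Concretely, since $P_n(S)$ is computed (for $n$ large) by pushing $\hol_S(nK_S)$ forward to $\mathbb{P}^1$ and the multiple-fibre contributions enter through $\lfloor n(a_j+1)/m_j\rfloor$-type terms (this is how the explicit formulas in Remark \ref{record} arise), the requirement that a global pluricanonical section not vanish identically on $F'_i$ — equivalently that $\hol_{F'_i}$ appear in the restriction of $nK_S$ twisted down by the appropriate multiple of $F'_i$ — translates, via \ref{ray1}(ii) and \ref{KU}(1), into: the order of $\hol_{F'_i}(F'_i)$ divides $n a_i + n - $ (the integer multiple of $F'_i$ we subtracted), i.e. into a congruence $n_i \equiv 1 \bmod \nu_i$ for a suitable $n_i$; and the existence of the global section over all of $\mathbb{P}^1$ forces the degree condition $\sum_j n_j/m_j\in\mathbb{Z}$. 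So the two clauses of $U_i$ are precisely the local (torsion-order) and global (degree-on-$\mathbb{P}^1$) parts of the existence of a nonzero pluricanonical section behaving prescribedly at $q_i$.

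The key steps, in order, are: (a) set up the identification of $\mathrm{Pic}(nF'_i)$-data with the combinatorial data $(m_i,\nu_i)$ via \ref{ray2} and \ref{KU}, so that "jumping value" becomes a divisibility statement; (b) translate "$H^0(S,\hol_S(nK_S))$ has a section not identically zero on $F'_i$, with prescribed vanishing order along $F'_i$" into the system "$n_i\equiv 1\bmod\nu_i$ and $\sum n_j/m_j\in\mathbb{Z}$" by combining the canonical bundle formula with the filtration $\hol_{nF'_i}\to\hol_{(n-1)F'_i}$ used in \ref{Prop2} and \ref{ray2}; (c) produce such a section — this is where some care is needed — either by a direct Riemann--Roch count on $\mathbb{P}^1$ for a large enough auxiliary twist, or by invoking the growth of $P_n$ and choosing $n$ in a residue class that isolates the $i$-th fibre; (d) read off the integers $n_1,\dots,n_r$ from the vanishing orders of that section along the various $F'_j$. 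I expect step (c) to be the main obstacle: one must guarantee that the pluricanonical system really does see the $i$-th multiple fibre with the right congruence behaviour, and not merely produce some section whose divisor accidentally avoids the needed condition; this is exactly the point where in characteristic zero one would simply choose a loop, and here one instead has to run the Raynaud dévissage far enough (through possibly wild behaviour, where $\nu_j$ and $p\nu_j$ both occur by \ref{ray2}(ii) and \ref{KU}) to know which jumping values are available, and then use a counting/averaging argument over $n$ in the relevant arithmetic progression. Once (c) is secured, (d) and the verification that the resulting $n_j$ satisfy both clauses of $U_i$ are bookkeeping with the canonical bundle formula.
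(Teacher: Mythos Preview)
The paper does not prove this theorem at all; it is quoted from \cite{KU85} and used as a black box. So the comparison is really with Katsura--Ueno's argument, and your plan is not that argument. Their proof does not go through pluricanonical sections; it goes through the Picard scheme. Under the hypothesis $\chi(\hol_S)=0$ and base $\PP^1$, the Jacobian fibration is a product $E\times\PP^1$, and $S$ is obtained from it by logarithmic transformations at the points $q_i$, the data at $q_i$ being (essentially) an element of order $m_i$ in a quotient of $E$ whose image in $\Pic^0(F'_i)$ has order $\nu_i$. The conditions $U_i$ are then the reciprocity constraint that these local data come from a single global torsor (equivalently, that a certain sum of torsion elements of $E$ vanishes). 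Concretely, one looks at the subgroup of $\Pic(S)$ generated by the $F'_j$ and by $\Pic^0(S)$, and uses the restriction maps $\Pic^0(S)\to\Pic^0(F'_i)$ together with the fact that $\hol_{F'_i}(F'_i)$ has exact order $\nu_i$; the relation $m_1F'_1\equiv\cdots\equiv m_rF'_r$ in $\Pic(S)$, filtered through $\Pic^0$, yields the integers $n_j$.

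Your route has a genuine gap, and it is not only the acknowledged step (c). The translation in step (b) is already wrong in spirit: the integers $n_j$ in condition $U_i$ are \emph{arbitrary} integers (no constraint on $n_j$ for $j\neq i$), so $U_i$ is a statement about the subgroup of $\bigoplus_j \ZZ/m_j\ZZ$ generated by a certain element, i.e.\ about the vertical part of $\Pic(S)$ modulo the fibre class. The canonical class is one specific element of that group, and there is no mechanism by which sections of its multiples would detect the full lattice of relations among the $F'_j$. In particular, your proposed dictionary ``vanishing orders of a pluricanonical section $\leadsto$ integers $n_j$ satisfying $U_i$'' does not hold: the divisor of a section of $nK_S$ has the form $f^*D+\sum_j r_jF'_j$ with $0\le r_j<m_j$ and $r_j\equiv na_j\pmod{m_j}$ \emph{forced} by the canonical bundle formula, so you cannot choose the residues freely, and the congruence $n_i\equiv 1\pmod{\nu_i}$ does not emerge from this. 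Worse, the logical direction is inverted: in the paper the conditions $U_i$ are the \emph{input} that pins down the possible $(m_j,\nu_j)$ and thereby controls $P_n$; trying to read $U_i$ back out of the $P_n$ would at best recover the numerical class of $K_S$, not the finer torsion information $\nu_i$ that $U_i$ encodes. You need to work in $\Pic(S)$ directly, as Katsura--Ueno do.
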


\section{Proof of the main   theorem \ref{sharp}}

 Let $f:S\rightarrow C$ be a relatively minimal properly elliptic or properly quasi-elliptic fibration.
 Set here $g : = g(C)$ and  set  $t=length(T)$, where $T$ is the torsion sheaf appearing in the canonical bundle formula.
 
 The first important observation is that in the canonical bundle formula the term 
 $\chi (\hol_S)$ is $ \geq 0$, by Mumford's extension of Castelnuovo's theorem (\cite{mum1}).
 
 The case $\chi (\hol_S) + t  \geq 1, \ g \geq 1$ is quickly disposed of by observing that
 $$ P_n (S) = h^0 (\hol_S (n K_S)) \geq  h^0 (\hol_C (n \delta)) \geq g + (n-1) \geq n.$$
 
 If $ g \geq 2$, and $\chi (\hol_S) = t =0$, we are done, since then $P_n \geq (2n-1)(g-1)$.
 
 If instead $g=1, \chi (\hol_S) = t =0$ there are no wild fibres, and since the canonical divisor is
 not numerically trivial, $\sum_j ( 1 - \frac{1}{m_j}) > 0$,
 hence 
 $$ n K_S =  \sum_j n ( m_j - 1) F'_j   = \sum_j [\frac{n ( m_j - 1)}{m_j}] F_j + m_j \{ \frac{n ( m_j - 1)}{m_j} \}  F'_j,$$
 so we can rewrite
  $$ n K_S =   \sum_j f^* ([\frac{n ( m_j - 1)}{m_j}]  q_j )+ D,$$
 where $D$ is an effective divisor (with integral coefficients).
 
 Hence $$ (*) \ P_n (S) \geq   \sum_j [\frac{n ( m_j - 1)}{m_j}] \geq [ \frac{n}{2} ].$$
 
 We may therefore assume that $g=0$.
 
 For $ g=0$, if $ \chi (\hol_S) + t  \geq 3$, we get  $P_n (S) \geq n+1$.
 
 If $g=t=0,  \chi (\hol_S) = 2$, then again there are no wild fibres and the same argument as in $(*)$ yields
 $$  \ P_n (S) \geq  1 +  \sum_j [\frac{n ( m_j - 1)}{m_j}] \geq 1 +  [ \frac{n}{2} ].$$

 We are left with  the following possibilities: \\
   Case (1) $\chi(\mathcal{O}_S)=1,\ t=1$ and $g=0$;\\
     Case (2) $\chi(\mathcal{O}_S)=0,\ t=2$ and $g=0$;\\
            Case (3) $\chi(\mathcal{O}_S)=0,\ t=1$ and $g=0$;\\
        Case (4) $\chi(\mathcal{O}_S)=t=0$ and $g=0$.\\ 
        
      The next lemma shows that, except possibly in case (1), we need to take care 
      only of the properly elliptic case.

\begin{lemma}
	There exists no quasi-elliptic fibration $f:S\rightarrow \mathbb{P}^1$ with $\chi(\mathcal{O}_S)=0$.
\end{lemma}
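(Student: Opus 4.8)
The plan is to combine the trivial structural feature of a quasi-elliptic fibration (the general fibre is a \emph{rational} curve) with the basic numerical inequalities among $p_g$, $h$, $q$ recalled in Section 1; no Euler-characteristic bookkeeping is needed.

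First I would show that $q(S)=0$. By definition of a quasi-elliptic fibration, the general fibre $F$ of $f$ is a rational curve with one cusp, so its normalization is $\PP^1$; hence every morphism from $F$ to an abelian variety is constant (because $\Hom(\PP^1,A)=0$ for an abelian variety $A$, so the composite $\PP^1=\widetilde F\to F\to A$ is constant and $\widetilde F\to F$ is surjective). Since $f$ has connected fibres with $f_*\hol_S=\hol_{\PP^1}$, the Albanese morphism $\alpha_S\colon S\to\Alb(S)$ is therefore constant on the general fibre, hence contracts every fibre of $f$ and factors through $f$, and then through $\Alb(\PP^1)=0$. Thus $\alpha_S$ is constant, $\Alb(S)=0$, and $q(S)=\tfrac12 b_1(S)=\dim\Alb(S)=0$.

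Now I would simply feed $q(S)=0$ into the inequality $2p_g(S)\ge\Delta:=2\bigl(h(S)-q(S)\bigr)$ from Section 1 (valid in every characteristic), which reads $p_g(S)\ge h(S)$. Hence
$$\chi(\hol_S)=1-h(S)+p_g(S)\ge 1,$$
contradicting the hypothesis $\chi(\hol_S)=0$. (In fact this argument shows the stronger statement that any quasi-elliptic surface fibred over $\PP^1$ has $\chi(\hol_S)\ge 1$.)

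The only step carrying any content is the vanishing $q(S)=0$, and this is precisely where the \emph{quasi-}elliptic hypothesis is essential, rather than a decoration: for genuinely elliptic fibrations over $\PP^1$ the general fibre has genus $1$, the Albanese need not factor through the base, and indeed the isotrivial elliptic fibrations $(E_1\times E_2)/G\to\PP^1$ underlying bielliptic surfaces do have $\chi(\hol_S)=0$. I would also verify — it requires nothing — that positive characteristic introduces no subtlety here, since the argument uses only $\Hom(\PP^1,A)=0$, the universal property of the Albanese, and the connectedness of the fibres of $f$.
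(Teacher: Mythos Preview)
Your proof is correct and follows essentially the same route as the paper's own argument: both show $q(S)=0$ by observing that the rational general fibre forces the Albanese map to factor through $f\colon S\to\PP^1$ and hence through $\Alb(\PP^1)=0$, and then invoke the inequality $p_g\ge h-q$ from Section~1 to obtain $\chi(\hol_S)\ge 1$. The only difference is cosmetic: the paper phrases the first step as a contradiction from $q\ge 1$, while you argue $q=0$ directly.
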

\begin{proof}
	Assume we have such a fibration. 
	
	Let $\alpha: S\rightarrow A$ be the Albanese map of $S$ and assume that 
	$q : = dim(A)\geq 1$. Since a general fibre of $f$ is a cuspidal rational curve, whose image in $A$ must be a single point, we see that $\alpha$ factors through $f$. Hence the image of  $\alpha$
	is a point: since the image generates $A$, $A$ is a point and $q=0$,
	 a contradiction.
	 
	We conclude that  $q=0$, hence  $ p_g \geq h$ and  $ \chi (\hol_S) \geq 1$, a contradiction.
	
\end{proof}

Let us now proceed with the proof.

We can write
$$ K_S \equiv d F + \sum_i a_i F'_i,$$
where $F$ is a fibre of $f$.
We observe that $ p_g (S) = max (0, d+1)$.

Indeed, if $p_g \geq 1$,  
we can write $|K_S| = |M| + \Phi$, where $\Phi$ is the fixed part, and where the movable part
is of the form $(p_g-1) F$.

Hence $K_S$ is linearly equivalent to an effective
divisor $D$ of  the form $ K_S \equiv (p_g-1)   F + \sum_i b_i F'_i,$ with $ 0 \leq b_i < m_i$.   

If   $ d \geq 0$, then $p_g = d+1$ and the fixed part $ \Phi =  \sum_i a_i F'_i$.

Otherwise, if $d <0$ , and we assume $p_g \geq1$ we have a linear equivalence  of effective divisors:
$ (| d | + p_g-1) F + \sum_i b_i F'_i  \equiv \sum_i a_i F'_i$ which shows that $| d | + p_g-1 = 0$, 
a contradiction.

Hence in our cases we have respectively:
\\
   Case (1) $\chi(\mathcal{O}_S)=1,\ t=1, \ h=1, \ p_g=1$ and $g=0$;\\
     Case (2) $\chi(\mathcal{O}_S)=0,\ t=2, \ \ h=2, \ p_g=1$ and $g=0$;\\
            Case (3) $\chi(\mathcal{O}_S)=0,\ t=1, \ h=1, \ p_g=0$ and $g=0$;\\
        Case (4) $\chi(\mathcal{O}_S)=t=0, \ h=1, \ p_g=0$ and $g=0$.\\ 
        
        Observe therefore that corollary \ref{Cor} applies in all cases except (2).
        
       {\bf Case (1):} $ K_S \equiv  \sum_i a_i F'_i,$
       and if there exists a multiple fibre for which $a_j = m_j -1$, we are done, since then
       $P_n \geq [n/2] + 1$.
       
       Otherwise, there is exactly one multiple fibre, wild, with $t_j=1$,
       and by proposition \ref{Cor} and proposition \ref{Prop2} $a : =a_j$ satisfies
       $$ a = m-1 - \nu =  \nu  ( p^e -1) -1 > 0.$$
       If $\nu = 1$, we obtain $ a/m  = \frac{m-2}{m} \geq 1/3$, if $\nu \geq 2$ then  we get  
       $$ a/m \geq   \frac{p^e - 1 - 1/2}{p^e} =   \frac{2p^e - 3}{2 p^e} \geq 1/4$$
       and accordingly $P_n \geq [n/3] + 1$, $P_n \geq [n/4] + 1$.
       
       \qed
       
              {\bf Case (2):} Again $ K_S \equiv  \sum_i a_i F'_i,$
       and if there exists a multiple fibre for which $a_j = m_j -1$, we are done, since then
       $P_n \geq [n/2] + 1$.
       
       Otherwise there are only wild fibres, either one with $t_1=2$, or two with $t_1, t_2 = 1$.
In the latter case by corollary \ref{KU} we have $a_j = m_j -1 - \nu_j$, and we argue as in case (1).

In the former case we are left (set $m : = m_1, a:= a_1, \nu : = \nu_1$) with the cases 
$a = m-1 -  2 \nu$ or $a = m-1 -  (p+1) \nu$. It is clear that the first  possibility
will give a better estimate than the second, hence we treat the second.

Here $$  \frac{a}{m} =   \frac{p^e - p - 1 - 1/\nu}{p^e}  $$
which   is a monotone increasing function of $e, \nu, p$. 

We must have $ e \geq 2$, and for $ e=2, \nu = 1$ we must have $ p \geq 3$.

In conclusion, for $\nu = 1$, $  \frac{a}{m} \geq min (  \frac{4}{9},   \frac{4}{8}) = \frac{4}{9} \Rightarrow P_n \geq [\frac{4n}{9}] + 1$.

Instead for $\nu \geq 2$, the minimum is for $p=2, e=2, \nu = 2$, and we obtain
$  \frac{a}{m} \geq   \frac{1}{8} $. 

In this case we get  $P_n =  [n/8] + 1$, which would be a limit case, but 
the actual existence of this case with only one multiple fibre, 
and the above numerical characters, is unclear to us.

\qed

{\bf Case (3):} Here $K_S\equiv -F+\sum_{i}^{r}a_iF_i'$, where $F$ is a fibre of $f$. Since some multiple of $K_S$ is linearly equivalent to an effective divisor, we have 
$$(\divideontimes)\ -1+\sum_{i}^{r} \frac{a_i}{m_i}>0,$$
and it follows that $r\geq 2$.
Since $t=1$, there exists one and only one wild fibre, say $m_1F'_1$, with $t_1=1$: by proposition \ref{Cor} and proposition \ref{Prop2} $a_1=m_1-1$, or $a_1=m_1-1-\nu_1$. Hence we can rewrite $K_S$ as follows: $$K_S\equiv -F+a_1F'_1+\sum_{i=2}^{r}(m_i-1)F'_i,$$
so that $$P_n=max(0, 1-n+[\frac{na_1}{m_1}]+\sum_{i=2}^{r}[\frac{n(m_i-1)}{m_i}]).$$

If $r\geq 4$ or $r=3,\ a_1=m_1-1$, we have $P_n\geq 1-n+3[n/2]$, and writing $ n = 2k + s,\ s\in\{0,1\}$,
we get $P_n \geq 1 - 2k -  s  + 3 k = 1 + k -  s $,
which is at least $1$ for $ n \geq 2$, and $\geq 2$ for $n \geq 4$.

In the case where $r=3,\ a_1=m_1-1-\nu_1$, consider first the possibility  $a_1=0$.

 Then ($\divideontimes$) implies that $m_2$ or $m_3$ $\geq 3$, and we get $$P_n\geq 1-n+[2n/3]+[n/2].$$

Writing $n=2k+s$ with $s\in\{0,1\}$, we get
$$P_n\geq 1-2k-s+k+[(k+2s)/3]+k=1+[(k+2s)/3]-s,$$ 
which is $1+[k/3] $ if $s=0$ and $[(k+2)/3] $ when $s=1$. Hence we get $P_n\geq 1$ for $n\geq 2$, $P_6\geq 2$ and $P_n\geq 2$ for $n\geq 8$. 

If instead $a_1>0$, we are of course done if $m_2$ or $m_3$ is $\geq 3$. The remaining case is $m_2=m_3=2$, 
and now  condition $U_1$ implies  that there exists an integer $l$ such that $2(l\nu_1+1)/(p^{e_1}\nu_1)\in\mathbb{Z}$, which implies $\nu_1| 2$. Therefore we conclude that $$\frac{a_1}{m_1}=\frac{m_1-1-\nu_i}{m_1}\geq \frac{m_1-3}{m_1},$$
whence $ m_1 \geq 4$ and  $a_1/m_1\geq 1/4$. Hence we have $$P_n\geq 1-n+[n/4]+2[n/2].$$

We get $P_n\geq 1+[k/2]$ for even $n=2k$ and $P_n\geq [(2k+1)/4]$ for odd $n=2k+1$. Hence we have $P_2\geq 1$, $P_4\geq 2$ and $P_n\geq 2$ for $n\geq 8$. 

We are left with the case $r=2$. 

Assume first that $a_1=m_1-1$: the situation is then identical  to the case $r=3,\ a_1=0$, and we are done.

We may therefore assume that  $a_1=m_1-1-\nu_1>0$, and  inequality $(\divideontimes)$ becomes now 
$$(\divideontimes\divideontimes)\ 1-\frac{1+1/\nu_1}{p^{e_1}}-\frac{1}{m_2}>0 ,$$ 
and we have 
$$P_n\geq 1-n+[\frac{n(p^{e_1}-1-1/\nu_1)}{p^{e_1}}]+[\frac{n(m_2-1)}{m_2}]. $$
 Conditions $U_1,\ U_2$ imply that $\nu_1|m_2$, $m_2 |m_1=p^{e_1}\nu_1$, hence $m_2 = \nu_1 p^{\e}, \e \leq e_1$. 

If $\nu_1=1$, an immediate consequence is that $m_2\geq p$. Moreover, combining with $(\divideontimes\divideontimes)$, we get $p^{e_1}\geq 5$ or $ p^{e_1}= p^{\e}=4$;  but the latter case gives no problems since then   
$$ (***) \ P_n\geq  f_n : = 1-n+[\frac{n}{2}]+[\frac{3n}{4}] = f_s + k , \ n = 4k + s, \ 0 \leq s \leq 3,$$
$$  f_s = 1, 0, 1, 1, \ s= 0,1,2,3.$$
We treat the several cases:
\begin{itemize}
	\item
	If $p\geq 5$, then  $m_2\geq 5$, hence $P_n\geq f_n : = 1-n+[3n/5]+[4n/5]$. Writing $n=5k+s$ with $0\leq s\leq 4$, we get $$P_n\geq f_n  = 2 k + f_s, \ f_ s =  1, 0 , 1, 1, 2, \ s= 0,1,2,3,4.$$

	Therefore we have  $P_n\geq 1$ for $n\geq 2$, and $P_n\geq 2$ for $n\geq 4$. 
	
	\item
	If $p=3$, then  $e_1\geq 2 $ and $m_2\geq 3$. It follows that $P_n\geq f_n : = 1-n+[7n/9]+[2n/3]$.
	 Writing $n=3k+s$ with $0\leq s\leq 2$, we get $$P_n\geq 1-3k-s+2k+[(3k+7s)/9]+2k+[2s/3]$$
	$$=1+k+[(3k+7s)/9]+[2s/3]-s.$$ 
	
	Hence $P_n\geq 1+k$ except for the case $k=0,s=1$, which implies that $P_n\geq 1$ for $n\geq 2$ and $P_n\geq 2$ for $n\geq 3$.
	\item
	If finally $p=2$, observe that $e_1\geq 3$ and $m_2\geq 2$, hence we have $P_n\geq 
	f_n : = 1-n+[3n/4]+[n/2],$ a case which was already treated in $(***)$.
	
\end{itemize}
 Assume now $\nu_1\geq 2$. 
\begin{itemize}
	\item
	If $p^{e_1}\geq 4$, we have that $P_n\geq 1-n+[5n/8]+[n/2]$. Writing $n=2k+s$ with $s\in\{0,1\}$, we get 
	$$P_n\geq1-2k-s+k+[(2k+5s)/8]+k$$
	$$=1+[(2k+5s)/8]-s.$$
	It follows that $P_n\geq 1+[k/4]$ for $s=0$ and $P_n\geq [(2k+5)/8]$ for $s=1$. For the worst case where $p^{e_1}=4$, $\nu_1=m_2=2$ (this case does not actually occur since then condition $U_1$ fails), we have that 
	$P_1=P_3=0$, $P_2=P_4=P_5=P_6=P_7=1$, $P_8=2$ and $P_n\geq 2$ for $n\geq 12$. 
	
	\item
	If $p^{e_1}=3$, we cannot have $m_2=\nu_1=2$, since this would contradict  inequality $(\divideontimes\divideontimes)$. Hence we have either $m_2,\ \nu_1\geq 3$ or $\nu_1=2$, $m_2=6$. We obtain in the respective cases that 
	$$(*1)\ P_n\geq 1-n+[5n/9]+[2n/3]$$
	resp.
	$$(*2)\ P_n\geq 1-n+[n/2]+[5n/6].$$
	
	For $(*1)$, writing $n= 3k+s$ with $0\leq s\leq 2$, we get
	$$P_n\geq 1 +[(6k+5s)/9]+[2s/3]-s,$$
	which implies that $P_n\geq 1+[2k/3]$ for $s=0$, $P_n\geq [(6k+5)/9]$ for $s=1$ and $P_n\geq 1+[(6k+1)/9]$ for $s=2$. Hence $P_n\geq 1$ for $n\geq 2$, $P_6\geq 2$, and $P_n\geq 2$ for $n\geq 8$.
	
	For $(*2)$, writing $n=2k+s$ with $s\in \{0,1\}$, we get 
	$$P_n\geq 1 +[(4k+5s)/6]-s,$$
	it follows that $P_n\geq 1+[2k/3]$ for $s=0$ and $P_n\geq [(4k+5)/6]$ for $s=1$. We see that $P_n\geq 1$ for $n\geq 2$, and $P_n\geq 2$ for $n\geq 4$. 
	\item
	If $p^{e_1}=2$, we have either $m_2=\nu_1,\ \nu_1\geq 4$ or $m_2=2\nu_1,\ \nu_1\geq 3$.
	It follows that 
	$$(*3)\ P_n\geq 1-n+[3n/8]+[3n/4]$$
	resp.
	$$(*4)\ P_n\geq 1-n+[n/3]+[5n/6].$$
	
	For $(*3)$, writing $n=4k+s$ with $0\leq s\leq 3$, we get 
	$$P_n\geq 1+[(4k+3s)/8]+[3s/4]-s,$$
	which equals $1+[k/2]$ for $s=0$, $[(4k+3)/8]$ for $s=1$, $[(4k+6)/8]$ for $s=2$, and $1+[(4k+1)/8]$ for $s=3$. Hence for the worst numerical case $\nu_1=m_2=4$ (this case does not actually occur since again  condition $U_1$ fails), we have $P_3=P_4=P_6=P_7=1$, $P_2=P_5=0$, $P_8=2$, $P_{12}=2$, $P_{13}=1$, and $P_n\geq 2$ for $n\geq 14$. 
	
	For $(*4)$, writing $n=3k+s$ with $0\leq s\leq 2$, we get 
	$$P_n\geq 1+[(3k+5s)/6]-s,$$
	hence $P_n\geq 1+[k/2]$ for $s=0$, $\geq [(3k+5)/6]$ for $s=1$, and $\geq [(3k+4)/6]$ for $s=2$. We conclude that $P_n\geq 1$ for $n\geq 3$, $P_6\geq 2$, and $P_n\geq 2$ for $n\geq 9$.
\end{itemize}

\qed

 {\bf Case (4):} Here  $ K_S \equiv  - 2 F +  \sum_i^r ( m_i -1 )F'_i,$
 since $t=0$ implies that there are no wild fibres.
 
 In view of Theorem \ref{Mon} this situation is exactly as in the classical
 case. But our main theorem is new also in the classical case, so 
 we proceed to treat  case (4).
 
 We assume wlog that
 $$  m_1 \leq m_2 \leq \dots \leq m_r,$$
 and we recall that 
 $$ (****) \ P_n =   max ( 0 , 1 - 2n + \sum_j [ \frac{ n (m_j -1)}{m_j}] ).$$
 
 For $r \geq 5$ we have  $P_n \geq 1 - 2n + 5  [ n/2]$, and writing $ n = 2k + s$,
 we get $P_n \geq 1 - 4k - 2 s  + 5 k = 1 + k - 2 s $,
 which is at least $1$ for $ n \geq 4$, and $\geq 2$ for $n \geq 6$.
 
 Assume $r=4$ and observe once more that the right hand side of $(****)$ 
 is an increasing function of the multiplicities $m_j$, hence the worst case
 is $(2,2,3,3)$. Indeed, the worst case
 would be numerically  $(2,2,2,3)$, but indeed this case does not occur,
 since property $U_4$ is not fulfilled.
 
 Hence the  estimate 
 $$ P_n \geq  1 - 2 n + 2 [n/2] + 2 [2n/3] = 1 + (2 [n/2] - n) + (2 [2n/3] - n). $$
 For even numbers $ n = 2 k$, we get $ P_n =   1 + 2  [k/3] $,
 which is $\geq 1$, and $\geq 3$ as soon as $ n \geq 6$.
 For odd numbers $ n = 2 k + 1$ we get 
 $$ P_n = 2 [\frac{4k+2}{3}] - 2k -1 =  2 [\frac{k+2}{3}] - 1 ,$$
 which is $\geq 1$ for $n \geq 3$,  $\geq 3$ as soon as $ n \geq 9$.
 
 In the case $r=3$ recall that conditions $U_1, U_2, U_3$ are equivalent
 to the condition that $ m_k $ divides the least common multiple of $m_i, m_j$
 for each choice of $ \{ i,j,k \} = \{1,2,3 \}$.
 
 Assume now $ m_1 \geq 4$: by monotonicity the worst case is $(4,4,4)$,
 where, setting $ n = 4k + s, 0 \leq s \leq 3$, 
 $$ P_n \geq    3 [3n/4] - 2n + 1= 3 ( 3k + [3s/4]) - 8 k - 2s + 1 =1 +  k + 3 [3s/4] - 2s. $$
 We get
 \begin{itemize}
 \item 
 $ k+ 1 $ for $s=0, 3$
 \item
 $k$ for $s=2$ 
 \item
 $k-1$ for $s=1$.
 \end{itemize}
 Hence $P_3 = 1, P_4 = 2$, $P_n \geq 2$ for $ n \geq 10$.
 
 Assume now that $m_1 = 3$. Then $3$ divides $LCM(m_2, m_3)$ hence either $m_2 = 3a$ or $3$ does not divide $m_2$ and $m_3 = 3b$.
 or both alternatives hold.
 
Keeping in mind the positivity of $K_S$, equivalent here to  $\sum_j \frac{1}{m_j} < 1$, each alternative leads to a worst possible case,
i.e. one maximizing $\sum_j \frac{1}{m_j} < 1$.

\begin{enumerate}
\item
$(3, 3a, 3b)$ , $a | b$, $b | a \Rightarrow a=b \Rightarrow (3, 3a, 3a)$: worst case $(3,6,6)$;
\item
$(3, c, 3b)$ $c$ not divisible by $3$, $ 3b | 3c, \ c | b \Rightarrow b=c \Rightarrow (3, c, 3c)$:  worst case $(3,4, 12)$;
\item
$(3, 3a, c)$ $c$ not divisible by $3$, $ 3a | 3c, \ c | a \Rightarrow a=c \Rightarrow (3, 3a, a)$: same case as the previous.

\end{enumerate}
Recall the plurigenus formula, here it gives respectively 
 $$ (3,6,6): \  \ P_n =   max ( 0 , F(n)), \ F(n) : = 1 - 2n +  [ \frac{ 2 n }{3}]  + 2   [ \frac{ 5 n }{6}]  .$$
 $$ (3,4,12): \  \ P_n =   max ( 0 , F(n)), \ F(n) : =1 - 2n +  [ \frac{ 2 n }{3}]  +   [ \frac{ 3 n }{4}] +  [ \frac{ 11 n }{12}]  .$$ 
 
 In the former case, writing $ n = 6k + s$, $ 0 \leq s \leq 5$, we get
 $$   \ F (n ) = 2 k +  F(s), \ F(s)  =  1, -1 , 0 , 1, 1, 2    (s=0,1, \dots 5)$$
 hence $P_3 \geq 1, P_5 \geq 2$, $P_n \geq 2 $ for $ n \geq 8$.
 
 In the latter case, writing $ n = 12k + s$, $ 0 \leq s \leq 11$, we get
 $$   \ F (n)  = 4 k  +  F(s),  \ F(0) = 1 , s \geq 1 \Rightarrow  \ F(s) =  -  s +  [ \frac{ 2 s }{3}]  +   [ \frac{ 3 s }{4}] $$
 $$ F(s) = 1,  -1, 0, 1, 1, 1, 2, 2, 3, 3, 3, 4, \ 0 \leq s \leq 11 ,$$
 hence $P_n  \geq 1$ for $ n \geq 3$,  $P_n \geq 2 $ for $ n \geq 6$.
 
 Assume finally $m_1=2$. Then one of $m_2, m_3$ is even. If $m_j = c $ is odd, $m_i = 2b$, then $ c | b, 2b | 2c \Rightarrow b=c$,
 hence we get $(2,b, 2b)$ and the worst case is the case $(2,5,10)$ which was already considered in remark \ref{record}.
 
 Similarly, in case $(2, 2a, 2b)$ again $a=b$, hence we get a triple $(2, 2a, 2a)$ and the worst case is the case $(2,6,6)$ 
 which was already considered in remark \ref{record}.
 
 \begin{remark}\label{limit}
 Our analysis allows us also to see (we omit further details) which are the possible cases where the estimates are sharp in the main theorem.
 
 \begin{itemize}
 \item
 (2):  $P_1 = P_2 = P_3 = 0$ exactly in case (4)  for triples $(2,b,2b), \ b\geq 5, 2 \nmid b,$ or $(2,2a, 2a), \ a \geq 3$.
 \item
(3):  $P_n \leq 1$ for $ n \leq 7$ in case (4) for the triple $(2,5,10)$ and possibly in case (2) with one wild fibre and
$p=\nu=e=2$.
 \item
 (4) : $P_{13} = 1$  exactly in case (4) for the triple $(2,6,6)$.
 
 \end{itemize}
 \end{remark}

   \bigskip

 {\bf Ackowledgement}

Both authors   gratefully acknowledge the support of the 
 ERC Advanced Grant n. 340258, `TADMICAMT'.
 
 The first author would like to thank  Ciro Ciliberto, Mirella Manaresi and Sandro Verra for organizing a Conference in memory
 of Federigo Enriques 70 years after his death: their invitation    provided stimulus to extend Enriques'  $P_{12}$-theorem to positive characteristic;
 and  Igor Dolgachev for     his  question  about item  (3) of the main  theorem
 (which had been treated in \cite{cb}).
  
The second author would like to thank Michel Raynaud for helpful communications.

    \bigskip

\end{document}